\newtheorem{theorem}{Theorem}[section]
\newtheorem{lemma}[theorem]{Lemma}
\newtheorem{proposition}[theorem]{Proposition}
\newtheorem{definition}[theorem]{Definition}
\newtheorem{example}[theorem]{Example}
\theoremstyle{remark}
\newtheorem{remark}{Remark}[section]
\newtheorem*{rem*}{Remark}
\newcommand{\comment}[1]{}
\numberwithin{equation}{section}
\begin{document}

\title{Theory of Extensions of Multiplicative Lie Algebras}
\author{Mani Shankar Pandey$^1$ and Sumit Kumar Upadhyay$^2$\vspace{.4cm}
}
\address{$^{1,2}$Department of Applied Sciences,\\ Indian Institute of Information Technology, Allahabad}
\thanks {2010 Mathematics Subject classification : 17B56, 19C09, 18G50}
\email{\tiny{$^1$manishankarpandey4@gmail.com, $^2$upadhyaysumit365@gmail.com}}.\\
\begin{abstract}
In this paper, we attempt to develop the Schreier theory for two special types extensions of multiplicative Lie algebras.
\end{abstract}
\maketitle
\textbf{Keywords}:  Lie algebras, Schreier extensions, Factor system, Cohomology.
\section{Introduction}
In 1993, G. J. Ellis (\cite{GJ})  asked the following question: is any universal commutator identity a consequence of all five well known universal commutator identities? He gave a partial affirmative answer to this question by using homological techniques and also introduced an interesting algebraic object called ``multiplicative Lie algebras" which is a generalisation of groups as well as Lie algebras. In \cite{FP} F. Point and P. Wantiez  introduced concept of nilpotency for multiplicative Lie algebras and proved many nilpotency results which are known for groups and Lie algebras. To see more structural properties of multiplicative Lie algebra, in \cite{AGNM}  A Bak. et al.  constructed and studied two homology theories for multiplicative Lie algebras which are already known for groups and Lie algebras. They also introdued central extensions of multiplicative Lie algebra. Furthermore, in 2015 G. Donadze et al.  \cite{GNM} gave the notion of non abelian tensor product of multiplicative Lie algebras and also established a relationship between the second homology and the non abelian exterior square of a multiplicative Lie algebra. In 2018, authors \cite{GNMA} proved some algebraic structural results for non abelian tensor product of multiplicative Lie algebras. They also obtained six term exact sequence in homology of multiplicative Lie algebras and proved a new version of Stallings theorem. 

One of the problem in the theory of extensions is ``to determine all multiplicative Lie algebras $G$ (up to isomorphism) having $H$ as an ideal such that $G/H$ is isomorphic to $K$, for any two multiplicative Lie algebras $H$ and $K$" In other words ``what are the classifications of all 2-fold extensions (upto equivalence) of $H$ by $K$"? In 2019, R. Lal and S. K. Upadhyay \cite{RLS} developed the Schreiers extension theory for 2-fold extension for a special case termed as ``central extension" in an attempt to derive the Schur-Hopf formula for the multiplicative Lie algebras. 
In this paper, we attempt to develop the Schreiers extension theory for 2-fold extension of an abelian group with trivial multiplicative Lie algebra structure by an arbitrary multiplicative Lie algebra analogus to group theory in two different cases, namely center extension and Lie center extension (it is known as Central extension in \cite{AGNM}). Moreover, there is an interesting problem is to develop the Schreiers extensions theory for n-fold extensions of multiplicative Lie algebras. Most of the definitions and terminology are taken from \cite{RL} $\&$ \cite{RLS}. Now we will give some basic definitions in the same manner to the extension theory of groups.
 

\begin{definition} 
\begin{enumerate}
\item A short exact sequence 
\begin{center}
\begin{tikzcd}
  E(H, K) \equiv 1\arrow{r} & H\arrow{r}{\alpha} & G \arrow{r}{\beta} & K\arrow{r} & 1
\end{tikzcd}
\end{center}
of multiplicative Lie algebras is called an extension of $H$ by $K$. Here it is clear that $\alpha$ is injective, so we can think $H$ as an ideal of $G$ and $\alpha$ as an inclusion map $i$.
\item A morphism from an extension 
\begin{center}
\begin{tikzcd}
  E(H, K) \equiv 1\arrow{r} & H\arrow{r}{\alpha} & G \arrow{r}{\beta} & K\arrow{r} & 1
\end{tikzcd}
\end{center} to an extension 
\begin{center}
\begin{tikzcd}
  E(H', K') \equiv 1\arrow{r} & H'\arrow{r}{\alpha'} & G' \arrow{r}{\beta'} & K'\arrow{r} & 1
\end{tikzcd}
\end{center}
is a triple $(\lambda, \mu, \nu)$, where $\lambda: H \longrightarrow H'$, $\mu: G \longrightarrow G'$ and $\nu : K \longrightarrow K'$ are multiplicative Lie algebra homomorphisms such that the diagram 
\begin{center}

\begin{tikzcd}
   E(H, K) \equiv 1\arrow{r} & H \arrow{r}{\alpha}\arrow{d}{\lambda} & G \arrow{r}{\beta}\arrow{d}{\mu} & K\arrow{r}\arrow{d}{\nu} & 1 \\
  E(H', K') \equiv 1\arrow{r} & H'\arrow{r}{\alpha'} & G'\arrow{r}{\beta'} & K'\arrow{r} & 1
\end{tikzcd}
\end{center} 
is commutative. 
%
\item Two extensions $E_1(H, K)$ and $E_2(H, K)$ of $H$ by $K$ are equivalent if there is an isomorphism $\phi :G_1 \longrightarrow G_2$ such that the following
diagram
\begin{center}
\begin{tikzcd}
   E_1(H, K) \equiv 1\arrow{r} & H \arrow{r}{i_1}\arrow{d}{I_H} & G_1 \arrow{r}{\beta_1}\arrow{d}{\phi} & K\arrow{r}\arrow{d}{I_K} & 1 \\
   E_2(H, K) \equiv 1\arrow{r} & H\arrow{r}{i_2} & G_2\arrow{r}{\beta_2} & K\arrow{r} & 1
\end{tikzcd}
\end{center}
is commutative.
\end{enumerate}
\end{definition}
Throughout this paper, $H$ denotes an abelian group with trivial multiplicative Lie algebra structure and $K$ denotes an arbitrary multiplicative Lie algebra.
\section{Schreier Theory for Center extension}
In this section, we discuss the theory of center extension. 
\begin{definition}
An extension \begin{tikzcd}
  E(H, K) \equiv 1\arrow{r} & H\arrow{r}{i} & G \arrow{r}{\beta} & K\arrow{r} & 1
\end{tikzcd} of an abelian group $H$ with trivial multiplicative Lie algebra structure by a multiplicative Lie algebra $K$ is called a \textbf{center extension} if $H$ is contained in the center of $G$.
\end{definition}
\begin{remark} Let $H$ be an abelian group and $End(H)$ be the set of all group endomorphism on $H$. Then $End(H)$ is a multiplicative Lie algebra with the operations $\cdot$ and $*$, where $\cdot$ and $*$ are defined by $(F\cdot G)(h) = F(h)G(h)$ and $(F*G)(h) = F(G(h))G(F(h^{-1}))$ respectively.
\end{remark}
Let 
\begin{tikzcd}
  E(H, K) \equiv 1\arrow{r} & H\arrow{r}{i} & G \arrow{r}{\beta} & K\arrow{r} & 1
\end{tikzcd} be a center extension of $H$ by $K$ and $t : K \longrightarrow G$ be a section of $E$. Then  it is easy to see that every element of group $G$ can be uniquely expressed in the form $ht(x)$, for some $h \in H$ and $x \in K$. 

The group operation $``\cdot " $ (for details see \cite{RLS}) in $G$ is given by $$\mathbf{(ht(x))\cdot (kt(y))=hkf^t(x,y)t(xy)},\hspace{2cm} ...(1)$$ where $f^t : K \times K \rightarrow H$ is a map satisfying 
\begin{equation}\label{1}
\mathbf{f^t(1,x)=f^t(x,1)=1 ~\text{and}~ f^t(x,y)f^t(xy,z) = f^t(y,z)f^t(x,yz)}
\end{equation}

Further $\beta(t(x)*h)=\beta(t(x))*\beta(h)=x*1=1$, $t(x)*h \in \ker(\beta) = H$. Thus for all $x \in K$, there is a map $\Gamma^t_x : H\rightarrow H$ defined by $ \Gamma^t_x(h)=t(x)*h$. 

Also for $h,k \in H$, $\Gamma^t_x(hk)=t(x)*(hk)= (t(x)*h)^h(t(x)*k) =(t(x)*h)(t(x)*k)$ \\
$\Rightarrow \Gamma^t_x(hk)=\Gamma^{t}_x(h)\Gamma^t_x(k)$. This shows that for all $x \in K$,  $\Gamma^t_x$ is a group endomorphism on $H$. 

Now the multiplicative Lie algebra structure $``*"$ in $G$ is given as follows: 

Since $\beta (t(x)*t(y)) = \beta(t(x))*\beta(t(y))= x*y = \beta(t(x*y))$, $(t(x)*t(y))\cdot(t(x*y))^{-1} \\ \in \ker(\beta)$. Hence there exists a map $h^t : K \times K \rightarrow H$ such that $$t(x)*t(y)=h^t(x,y)t(x*y).$$

Since $t(x)*t(x) = t(x)*1 = 1*t(x) = 1$, we have 
\begin{equation}\label{2}
\mathbf{h^t(x,x)=h^t(x,1)=h^t(1,x)=1}
\end{equation}
Further,  
 $$h*(kt(y))=(h*k)^k(h*t(y))=h*t(y)=\Gamma^t_y(h^{-1})$$
and
$$t(x)*(kt(y))=(t(x)*k)^k(t(x)*t(y))= \Gamma^t_x(k)h^t(x,y)t(x*y)$$

Therefore 
$(ht(x))*(kt(y))=^h(t(x)*kt(y))(h*kt(y))=(t(x)*(kt(y)))(h*(kt(y)))=\Gamma^t_x(k)\Gamma^t_y(h^{-1})h^t(x,y)t(x*y)$ (since $H \subseteq Z(G)$). Thus the multiplicative Lie algebra structure $``*"$ in $G$ is given by 
\begin{center}
$\mathbf{(ht(x))*(kt(y))=\Gamma^t_x(k)\Gamma^t_y(h^{-1})h^t(x,y)t(x*y)}\hspace{2cm} ...(2)$
\end{center}
\begin{lemma}
The map $\Gamma^t_{E}:K\longrightarrow End(H)$ defined by $\Gamma^t_{E}(x) = \Gamma^t_x$ is a multiplicative Lie algebra homomorphism.
\end{lemma}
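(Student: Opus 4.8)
The plan is to check that $\Gamma^t_E$ is compatible with both operations of the multiplicative Lie algebra $End(H)$, that is, to verify $\Gamma^t_{xy} = \Gamma^t_x \cdot \Gamma^t_y$ and $\Gamma^t_{x*y} = \Gamma^t_x * \Gamma^t_y$ for all $x, y \in K$. Each is an equality of endomorphisms of $H$, so it suffices to evaluate both sides at an arbitrary $h \in H$ and use $\Gamma^t_x(h) = t(x)*h$. Throughout I would repeatedly exploit three facts: $H$ lies in the center of $G$, so conjugation by any element of $H$ is trivial; $H$ carries the trivial Lie structure, so $a*b = 1$ whenever $a,b \in H$; and the defining identities of a multiplicative Lie algebra, namely $(uv)*w = {}^{u}(v*w)(u*w)$, $u*(vw) = (u*v)\,{}^{v}(u*w)$, the equivariance ${}^{u}(v*w) = ({}^{u}v)*({}^{u}w)$, antisymmetry $u*v = (v*u)^{-1}$, and the Jacobi identity.

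For the multiplicative part, I would start from $t(x)t(y) = f^t(x,y)t(xy)$, rewrite $t(xy) = f^t(x,y)^{-1}t(x)t(y)$, and expand $\Gamma^t_{xy}(h) = t(xy)*h$ using the left distributivity axiom. Since $f^t(x,y)^{-1} \in H$, the factor $f^t(x,y)^{-1}*h$ is trivial and the conjugation by $f^t(x,y)^{-1}$ disappears, leaving $(t(x)t(y))*h = {}^{t(x)}(t(y)*h)\,(t(x)*h)$. Because $t(y)*h = \Gamma^t_y(h) \in H$ is central, the conjugation again drops out, and commutativity of $H$ collapses this to $\Gamma^t_x(h)\Gamma^t_y(h) = (\Gamma^t_x \cdot \Gamma^t_y)(h)$. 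This step is routine.

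The substantive part is the Lie identity $\Gamma^t_{x*y} = \Gamma^t_x * \Gamma^t_y$. Here I would first use $t(x)*t(y) = h^t(x,y)t(x*y)$ to write $\Gamma^t_{x*y}(h) = t(x*y)*h = (t(x)*t(y))*h$, the correction term $h^t(x,y)^{-1}$ being annihilated exactly as above. On the other side, the definition of $*$ in $End(H)$ gives $(\Gamma^t_x * \Gamma^t_y)(h) = \Gamma^t_x(\Gamma^t_y(h))\,\Gamma^t_y(\Gamma^t_x(h^{-1})) = (t(x)*(t(y)*h))\,(t(y)*(t(x)*h^{-1}))$. Thus everything reduces to the single identity
\[
(t(x)*t(y))*h = (t(x)*(t(y)*h))\,(t(y)*(t(x)*h^{-1})),
\]
for $h$ central. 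I expect this to be the main obstacle, and I would obtain it by specialising the Jacobi identity to the triple $t(x), t(y), h$: since $h$ is central, all conjugations occurring in the Jacobi identity act trivially, and the two mixed terms can be rewritten, via antisymmetry and equivariance, as $\Gamma^t_x(\Gamma^t_y(h))^{-1}$ and $\Gamma^t_y(\Gamma^t_x(h^{-1}))^{-1}$; as $H$ is abelian, rearranging the resulting relation yields precisely the displayed equation. Combining the two parts shows that $\Gamma^t_E$ preserves both $\cdot$ and $*$, hence is a multiplicative Lie algebra homomorphism.
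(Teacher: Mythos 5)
Your proposal is correct and follows essentially the same route as the paper: the multiplicative part via $t(xy)=f^t(x,y)^{-1}t(x)t(y)$ together with centrality of $H$, and the Lie part via the Jacobi identity specialized to the same triple $(t(x),t(y),h)$, yielding $\Gamma^t_{x*y}(h)=\Gamma^t_x(\Gamma^t_y(h))\Gamma^t_y(\Gamma^t_x(h^{-1}))=(\Gamma^t_x*\Gamma^t_y)(h)$. The only cosmetic difference is that you dispose of the term $(h*t(x))*{}^{t(x)}t(y)$ directly by antisymmetry and equivariance, whereas the paper expands ${}^{t(x)}t(y)$ as $f^t(x,y)f^t(x^{-1},x)^{-1}f^t(xy,x^{-1})t(xyx^{-1})$ and then uses $\Gamma^t_{xyx^{-1}}=\Gamma^t_y$; your handling is slightly tidier, though note your passing claim that \emph{all} conjugations in the Jacobi identity act trivially overstates the case (${}^{t(x)}t(y)$ is not trivial), an imprecision your subsequent appeal to equivariance in fact repairs.
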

\begin{proof}
Let $x,y \in K$ and $h \in H$. Then 
 $\Gamma^t_{xy}(h)=t(xy)*h 
 =(f^t(x,y)^{-1}t(x)t(y))*h 
 =^{f^t(x,y)^{-1}}((t(x)t(y))*h)$ $(f^t(x,y)^{-1}*h)$ 
 $=(t(x)t(y))*h
 =^{t(x)}(t(y)*h)\cdot (t(x)*h) \\
 =(t(y)*h)\cdot (t(x)*h)$
 $=\Gamma^t_x(h)\Gamma^t_y(h))$
 $ \Rightarrow \Gamma^t_{xy}=\Gamma^t_x \Gamma^t_y$ $\Rightarrow\Gamma^t_{E}(xy) = \Gamma^t_{E}(x)\Gamma^t_{E}(y)$. Thus $\Gamma^t_{E}$ is a group homomorphism.
 
Now $\Gamma^t_{x*y}(h)=t(x*y)*h=(h^t(x,y)^{-1}(t(x)*t(y)))*h=^{h^t(x,y)^{-1}}((t(x)*t(y))*h).(h^t(x,y)^{-1}*h)=(t(x)*t(y))*h$. By Jacobi identity,

$((t(x)*t(y))*^{t(y)}h)((t(y)*h)*^ht(x))(h*t(x))*^{t(x)}t(y))=1$ $\Rightarrow ((t(x)*t(y))*h)(\Gamma^t_y(h)*t(x)) (\Gamma^t_x(h^{-1})*(f^t(x,y)t(xy)t(x)^{-1})=1$ $ \Rightarrow (\Gamma^t_{x*y}(h) \Gamma^t_x(\Gamma^t_y(h^{-1}))) (\Gamma^t_x(h^{-1})*(f^t(x,y)t(xy)f^t(x^{-1},x)^{-1}t(x^{-1})))=1$ $\Rightarrow \Gamma^t_{x*y}(h)\Gamma^t_x(\Gamma^t_y(h^{-1})) (\Gamma^t_x(h^{-1})*(f^t(x,y)\\f^t(x^{-1},x)^{-1}f^t(xy,x^{-1})t(xyx^{-1})))=1$
$\Rightarrow \Gamma^t_{x*y}(h) \Gamma^t_x(\Gamma^t_y(h^{-1})) \Gamma^t_{xyx^{-1}}(\Gamma^t_x(h)) = 1~\\\Rightarrow \Gamma^t_{x*y}(h)=\Gamma^t_x(\Gamma^t_y(h))\Gamma^t_y(\Gamma^t_x(h^{-1})) = (\Gamma^t_x *\Gamma^t_y)(h) \Rightarrow \Gamma^t _{x*y} = \Gamma^t_x *\Gamma^t_y \Rightarrow \Gamma^t_{E} ({x*y}) = \Gamma^t_{E}(x) *\Gamma^t_{E} (y)$. Therefore $\Gamma^t_{E}$ is a multiplicative Lie algebra homomorphism.
\end{proof}
Now we will see properties of the function $h^t$ by using equation (1) and (2):

Consider the expression

$ht(x)*(kt(y)\cdot lt(z))=ht(x)*(klf^t(y,z)t(yz))=\Gamma^t_x(klf^t(y,z))\Gamma^t_{yz}(h^{-1})h^t(x,yz))t(x*yz) \hspace{2cm} ...(3)$

On the other hand\\
$(ht(x))*(kt(y)\cdot lt(z))=(ht(x)*kt(y))\cdot ^{kt(y)}(ht(x)*lt(z))=(\Gamma^t_x(k)\Gamma^t_y(h^{-1})h^t(x,y)t(x*y))\cdot$ $(t(y)\Gamma^t_x(l)\Gamma^t_z(h^{-1})h^t(x,z)t(x*z)t(y)^{-1})=(\Gamma^t_x(k)\Gamma^t_y(h^{-1})h^t(x,y)t(x*y))(t(y)$ $\Gamma^t_x(l)\Gamma^t_z(h^{-1})h^t(x,z)t(x*z)f^t(y^{-1},y)^{-1}t(y^{-1}))
=(\Gamma^t_x(k)\Gamma^t_y(h^{-1})h^t(x,y)t(x*y))$ $(\Gamma^t_x(l)\\ \Gamma^t_z(h^{-1})h^t(x,z)f^t(y,x*z)t(y(x*z))f^t(y^{-1},y)^{-1}t(y^{-1}))=(\Gamma^t_x(k)\Gamma^t_y(h^{-1})h^t(x,y)$ $t(x*y))(\Gamma^t_x(l)\Gamma^t_z(h^{-1})h^t(x,z)f^t(y,x*z)f^t(y^{-1},y)^{-1}f^t(y(x*z),y^{-1})t(^y(x*z)))$. 

Finally, we have\\
$ht(x)*(kt(y)\cdot lt(z))=\Gamma^t_x(k)\Gamma^t_y(h^{-1})h^t(x,y)\Gamma^t_x(l)\Gamma^t_z(h^{-1})h^t(x,z)f^t(y,x*z)$ $f^t(y^{-1},y)^{-1}\\f^t(y(x*z),y^{-1})
f^t(x*y,^y(x*z))t((x*y) ^y(x*z)))\hspace{2cm} ...(4)$

Now equating equation (3) and (4), we have 
\begin{equation}\label{3}
\begin{split}
\bold{h^t(x,y)h^t(x,z)f^t(y,x*z)f^t(y^{-1},y)^{-1}f^t(y(x*z),y^{-1})f^t(x*y,^y(x*z))}=\\
\bold{\Gamma^t_x(f^t(y,z))h^t(x,yz)} 
\end{split}
\end{equation}
Similarly, consider\\
$(ht(x)\cdot kt(y))*(lt(z))=(hkf^t(x,y)t(xy))*(lt(z))=\Gamma^t_{xy}(l)\Gamma^t_z(h^{-1}k^{-1}f^t(x,y)^{-1})h^t(xy,z)\\t(xy*z)\hspace{2cm} ...(5)$

On other hand\\
$(ht(x)\cdot kt(y))*(lt(z))=^{ht(x)}(kt(y)*lt(z))\cdot (ht(x)*lt(z))=\Gamma^t_y(l)\Gamma^t_z(k^{-1})h^t(y,z)f^t(x,y*z)f^t(x^{-1},x)^{-1}$ $f^t(x(y*z),x^{-1})t(^x(y*z))\Gamma^t_x(l)\Gamma^t_z(h^{-1})h^t(x,z)t(x*z)=
\Gamma^t_{xy}(l)\Gamma^t_z(h^{-1}k^{-1})\\h^t(y,z) h^t(x,z)$ $f^t(x,y*z)f^t(x^{-1},x)^{-1}f^t(x (y*z),x^{-1})f^t(^x(y*z),x*z)t(^x(y*z)(x*z))\hspace{2cm} ...(6)$

Now equating equation (5) and (6), we have
\begin{equation}\label{4}
\begin{split}
\bold{h^t(x,z)h^t(y,z)f^t(x,y*z)f^t(x^{-1},x)^{-1}f^t(x(y*z),x^{-1})f^t(^x(y*z),x*z)}=\\
\bold{\Gamma^t_z(f^t(x,y)^{-1})h^t(xy,z)}
\end{split}
\end{equation}
Now consider the expressions

$^{lt(z)}(ht(x)*kt(y))=t(z)\Gamma^t_x(k)\Gamma^t_y(h^{-1})h^t(x,y)t(x*y)f^t(z^{-1},z)^{-1}t(z^{-1})$ $=\Gamma^t_x(k)\\ \Gamma^t_y(h^{-1})  h^t(x,y)f^t(z,x*y)t(z(x*y))f^t(z^{-1},z)^{-1}t(z^{-1})$ $= \Gamma^t_x(k)\Gamma^t_y(h^{-1})h^t(x,y)\\ f^t(z,x*y)f^t(z^{-1},z)^{-1}f^t(z(x*y),z^{-1})t(^z(x*y))\hspace{2cm} ...(7)$\\
Also we have \\
$ ^{lt(z)}(ht(x))*^{lt(z)}(kt(y))= (t(z)(ht(x)f^t(z^{-1},z)^{-1}t(z^{-1}))*(t(z)(kt(y))f^t(z^{-1},z)^{-1}\\ t(z^{-1}))$ $=(hf^t(z,x)f^t(z^{-1},z)^{-1}f^t(zx,z^{-1})t(^zx))*(kf^t(z,y)f^t(z^{-1},z)^{-1}f^t(zy,z^{-1})\\ t(^zy))=\Gamma^t_{x}(kf^t(z,y)$ $ f^t(z^{-1},z)^{-1}f^t(zy,z^{-1}))(\Gamma^t_{y}(h^{-1}f^t(z,x)^{-1}f^t(z^{-1},z)f^t(zx,z^{-1})^{-1})\\ h^t(^zx,^zy)t(^zx*^zy)\hspace{2cm} ...(8)$

By equating (7) and (8), we have
\begin{equation}\label{5}
\begin{split}
\bold{\Gamma^t_x(f^t(z,y)f^t(z^{-1},z)^{-1}f^t(zy,z^{-1}))\Gamma^t_y(f^t(z,x)^{-1}f^t(z^{-1},z)f^t(zx,z^{-1})^{-1})h^t(^zx,^zy)} =\\
\bold {h^t(x,y)f^t(z,x*y)f^t(z^{-1},z)^{-1}f^t(z(x*y),z^{-1})}
\end{split}
\end{equation}

Now consider the expression,

$$((ht(x)*kt(y))*^{kt(y)}lt(z))\cdot ((kt(y)*lt(z))*^{lt(z)}ht(x))\cdot ((lt(z)*ht(x))*^{ht(x)}kt(y))=1 $$ $$\Rightarrow (\Gamma^t_{x*y}(lf^t(y,z)f^t(y^{-1},y)^{-1}f^t(yz,y^{-1}))\Gamma^t_z(\Gamma^t_x(k^{-1})\Gamma^t_y(h)h^t(x,y)^{-1})h^t(x*y,^yz)$$ $$t((x*y)*^yz))(\Gamma^t_{y*z}(hf^t(z,x) f^t(z^{-1},z)^{-1}f^t(zx,z^{-1})) \Gamma^t_x(\Gamma^t_y(l^{-1})\Gamma^t_z(k)h^t(y,z)^{-1})$$ $$h^t(y*z,^zx)t((y*z)*^zx))(\Gamma^t_{z*x}(kf^t(x,y)f^t(x^{-1},x)^{-1}f^t(xy,x^{-1}))\Gamma^t_y(\Gamma^t_z(h^{-1})\Gamma^t_x(l)$$ $$h^t(z,x)^{-1})h^t(z*x,^xy)t((z*x)*^xy))=1\Rightarrow \Gamma^t_{x*y}(lf^t(y,z)f^t(y^{-1},y)^{-1}f^t(yz,y^{-1}))$$ $$\Gamma^t_z(\Gamma^t_x(k^{-1})\Gamma^t_y(h)h^t(x,y)^{-1})\Gamma^t_{y*z}(hf^t(z,x)f^t(z^{-1},z)^{-1}f^t(zx,z^{-1})) \Gamma^t_x(\Gamma^t_y(l^{-1})\Gamma^t_z(k)$$ $$h^t(y,z)^{-1})\Gamma^t_{z*x}(kf^t(x,y)f^t(x^{-1},x)^{-1}f^t(xy,x^{-1}))\Gamma^t_y(\Gamma^t_z(h^{-1})\Gamma^t_x(l)   h^t(z,x)^{-1})h^t(x*y,^yz)$$ $h^t(y*z,^zx)f^t((x*y)*^yz,(y*z)*^zx)h^t(z*x,^xy)f^t(((x*y)*^yz)((y*z)*^zx),(z*x)*^xy)=1$

   $$ \Rightarrow \Gamma^t_{x*y}(lf^t(y,z)f^t(y^{-1},y)^{-1}f^t(yz,y^{-1}))\Gamma^t_{y*z}(hf^t(z,x)f^t(z^{-1},z)^{-1}f^t(zx,z^{-1}))$$
   $$\Gamma^t_{z*x}(kf^t(x,y) f^t(x^{-1},x)^{-1}f^t(xy,x^{-1}))
   \Gamma^t_{x*z}(k)\Gamma^t_{z*y}(h)\Gamma^t_{y*x}(l)\Gamma^t_{z}(h^t(x,y)^{-1})\Gamma^t_x(h^t(y,z)^{-1}))$$ $\Gamma^t_y(h^t(z,x)^{-1}))h^t(x*y,^yz)h^t(y*z,^zx) f^t((x*y)*^yz,(y*z)*^zx)h^t(z*x,^xy)f^t(((x*y)*^yz)\\((y*z)*^zx),(z*x)*^xy)=1$.
    
Thus we have
\begin{equation}\label{6}
\begin{split}
\bold{\Gamma^t_{x*y}(f^t(y,z)f^t(y^{-1},y)^{-1}f^t(yz,y^{-1}))\Gamma^t_{y*z}(f^t(z,x)f^t(z^{-1},z)^{-1}f^t(zx,z^{-1}))}\\
 \bold{\Gamma^t_{z*x}(f^t(x,y)f^t(x^{-1},x)^{-1}f^t(xy,x^{-1}))\Gamma^t_{z}(h^t(x,y)^{-1})
 \Gamma^t_{x}(h^t(y,z)^{-1})\Gamma^t_y(h^t(z,x)^{-1})}\\\bold{h^t((x*y),^yz)h^t((y*z),^zx)h^t((z*x),^xy)                                                        f^t((x*y)*^yz,(y*z)*^zx)f^t(((x*y)*^yz)}\\ \bold{((y*z)*^zx),(z*x)*^xy)=1} \
 \end{split}
 \end{equation}
\begin{definition} \label{center factor system}
Let $H$ be an abelian group with trivial multiplicative Lie algebra structure and $K$ be an arbitrary multiplicative Lie algebra.  Then a \textbf{center factor system} is a quintuple $(K,H,f,h,\Gamma)$, where $\Gamma : K \longrightarrow End(H)$ is a multiplicative Lie algebra homomorphism and $f, \ h$ are maps from $K \times K$ to $H$ satisfying the conditions like equations $(\ref{1}), (\ref{2}), (\ref{3}), (\ref{4}), (\ref{5})$ and $(\ref{6})$.
\end{definition}
\begin{proposition} \label{center}
Every center extension $E(H, K)$ with a choice of section $t$ determines a center factor system $\text{CFac}(E,t)=(K, H, f^t,h^t,\Gamma^t_{E})$ and conversely for a given center factor system $(K, H, f,h,\Gamma)$, their exists a center extension $E(H, K)$ of $H$ by $K$ with a section $t$ such that $(K, H, f,h,\Gamma)=\text{CFac}(E,t)$.
\end{proposition}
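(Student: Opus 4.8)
The plan is to establish the two implications separately. The forward direction is, in effect, already contained in the computations preceding the statement. Given a center extension $E(H,K)$ and a section $t$, formula (1) produces the map $f^t:K\times K\to H$ and the displayed equation defining $h^t$ produces $h^t:K\times K\to H$, while $\Gamma^t_E:K\to End(H)$ is a multiplicative Lie algebra homomorphism by the Lemma above. Conditions $(\ref{1})$ and $(\ref{2})$ are the normalisation and $2$-cocycle identities recorded immediately after formula (1) and after the definition of $h^t$, and $(\ref{3})$--$(\ref{6})$ were obtained by evaluating the distributivity, conjugation-equivariance and Jacobi identities of a multiplicative Lie algebra on generic elements $ht(x),kt(y),lt(z)$ and comparing the two sides. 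Hence $\text{CFac}(E,t)=(K,H,f^t,h^t,\Gamma^t_E)$ satisfies every condition of Definition~\ref{center factor system}, so it is a center factor system; this direction amounts to assembling what has already been computed.

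For the converse I would realise the factor system by a split construction. Given a center factor system $(K,H,f,h,\Gamma)$, put $G=H\times K$ as a set and define, modelled on formulas (1) and (2),
\begin{align*}
(a,x)\cdot(b,y)&=(ab\,f(x,y),\,xy),\\
(a,x)*(b,y)&=(\Gamma_x(b)\,\Gamma_y(a^{-1})\,h(x,y),\,x*y).
\end{align*}
First I would check that $(G,\cdot)$ is a group: associativity is exactly the $2$-cocycle relation $(\ref{1})$, the element $(1,1)$ is a two-sided identity because $f(1,x)=f(x,1)=1$, and the inverse of $(a,x)$ is written down using $f(x^{-1},x)$. I would then verify the multiplicative Lie algebra axioms for $*$. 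The relation $g*g=1$ reduces to $\Gamma_x(a)\Gamma_x(a^{-1})h(x,x)=1$, which holds since $\Gamma_x$ is an endomorphism and $h(x,x)=1$ by $(\ref{2})$; the right- and left-distributivity laws, the conjugation-equivariance law and the Jacobi identity correspond respectively to $(\ref{3})$, $(\ref{4})$, $(\ref{5})$ and $(\ref{6})$, so reading the earlier derivations backwards shows that each axiom holds in $G$.

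It then remains to produce the extension data and the section. Define $i:H\to G$ by $i(a)=(a,1)$ and $\beta:G\to K$ by $\beta(a,x)=x$. Then $\beta$ is a surjective homomorphism for both $\cdot$ and $*$ with kernel $i(H)$, and $i$ is injective; $i$ is a homomorphism of multiplicative Lie algebras because $\Gamma(1)$ is the identity of the group $(End(H),\cdot)$, namely the constant map $a\mapsto 1$, together with $h(1,1)=1$, which forces $i(a)*i(b)=(1,1)$ in agreement with the trivial Lie structure on $H$. The image $i(H)$ is central in $(G,\cdot)$ since $f(1,y)=f(y,1)=1$ and $H$ is abelian, so the resulting sequence is a center extension. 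Finally I take the section $t(x)=(1,x)$; then $i(a)\,t(x)=(a,x)$, so each element of $G$ is uniquely of this form, and a direct computation gives $f^t=f$, $h^t=h$ and $\Gamma^t_x=\Gamma_x$, that is $\text{CFac}(E,t)=(K,H,f,h,\Gamma)$.

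The main obstacle will be the verification of the multiplicative Lie algebra axioms for $(G,*)$, and above all matching the Jacobi identity to $(\ref{6})$: this requires reproducing, in the split setting, the long chain of conjugation and product manipulations that originally led to $(\ref{6})$, and confirming that no relation beyond $(\ref{1})$--$(\ref{6})$ is needed. A secondary but essential technical point is checking that $\Gamma(1)$ equals the group-identity of $End(H)$, which is what makes $i$ respect $*$; this follows from $\Gamma$ being a homomorphism into $(End(H),\cdot)$ combined with the normalisation $(\ref{2})$.
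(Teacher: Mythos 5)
Your proposal is correct and takes essentially the same route as the paper: the forward direction assembles the already-derived conditions $(\ref{1})$--$(\ref{6})$, and the converse uses the identical split construction $G=H\times K$ with $(a,x)\cdot(b,y)=(abf(x,y),xy)$, $(a,x)*(b,y)=(\Gamma_x(b)\Gamma_y(a^{-1})h(x,y),x*y)$, first inclusion, second projection, and section $t(x)=(1,x)$. The only difference is one of detail: the paper dismisses the axiom checks as ``easy to see,'' whereas you correctly spell out how each multiplicative Lie algebra axiom reverses into the corresponding equation, including the centrality of $i(H)$ and the fact that $\Gamma(1)$ is the trivial endomorphism.
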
 
\begin{proof}
From the above discussions and the Definition \ref{center factor system}, it can be seen that every center extension $E(H, K)$ of $H$ by $K$ with a given section $t$ determines a center factor system $CFac(E, t) \equiv (K,H,\Gamma^t,f^t,h^t)$ (this is called as a center factor system given by the center extension E).

Conversely, let $G=H \times K.$ Define $\cdot$ and $*$ operations on $G$ as
\begin{center}
$(a,x)\cdot (b,y)=(abf(x,y),xy)$
and $(a,x)*(b,y)=(\Gamma_x(b)\Gamma_y(a^{-1})h(x,y),x*y)$
\end{center}
It is easy to see that $(G,\cdot,*)$ is a multiplicative Lie Algebra such that
\begin{center}
\begin{tikzcd}
   E(H, K) \equiv 1\arrow{r} & H\arrow{r}{i} & G \arrow{r}{p} & K\arrow{r} & 1
\end{tikzcd}
\end{center}
is center extension of $H$ by $K$, where $i$ is the first inclusion and $p$ is the second projection. Let $t$ be a section of $E$ given by,
\begin{center}
$t(x)=(1,x)$
\end{center}
By an easy computation, it can be seen that $\Gamma^t_{E}=\Gamma$, $f^t=f$ and $h^t=h$. 
\end{proof}
\subsection*{Equivalence between Category CEXT of center extensions and Category CFAC of center factor systems}
 Let  $(\lambda, \mu, \nu)$ be the morphism between the two center extensions $E(H_1, K_1)$ and $E(H_2, K_2)$, given by the following commutative diagram: 
\begin{center}
\begin{tikzcd}
   E(H_1, K_1) \equiv 1\arrow{r} & H_1 \arrow{r}{i_1}\arrow{d}{\lambda} & G_1 \arrow{r}{\beta_1}\arrow{d}{\mu} & K_1\arrow{r}\arrow{d}{\nu} & 1 \\
   E(H_2, K_2) \equiv 1\arrow{r} & H_2\arrow{r}{i_2} & G_2\arrow{r}{\beta_2} & K_2\arrow{r} & 1
\end{tikzcd}
\end{center}
Let $t_1$ and $t_2$ be sections of $E(H_1, K_1)$ and $E(H_2, K_2)$ respectively. Consider the corresponding factor systems $(K_1,H_1,f^{t_1},h^{t_1}, \Gamma^{t_1})$ and $(K_2,H_2,f^{t_2},h^{t_2}, \Gamma^{t_2})$ of $E(H_1, K_1)$ and $E(H_2, K_2)$ respectively. Then for $x \in K_1$, $\mu(t_1(x))\in G_2$ and $\beta_2(\mu(t_1(x)))$ = $\nu(\beta_1(t_1(x)))$ = $\nu(x)$ = $\beta_2(t_2(\nu(x)))$. Thus $\mu(t_1(x))(t_2(\nu(x)))^{-1}$ $ \in ker \beta_2(= H_2)$. In turn, we have a unique $g(x)$ $\in H_2$ such that $g(x)$ = $ \mu(t_1(x))(t_2(\nu(x)))^{-1}$. This implies 
\begin{center}
$\mu(t_1(x))$ = $ g(x)t_2(\nu(x))\hspace{2cm} ... (9)$
\end{center}
Since $t_1(1)=1$ = $t_2(1)$,  it follows that
\begin{equation}\label{7}
\bold{g(1)=1}
\end{equation}
Since $\mu(t_1(x)t_1(y))=\mu(t_1(x))\mu(t_1(y))$, we have the following equation
\begin{equation}\label{8}
\bold{\lambda(f^{t_1}(x,y))g(xy) = g(x)g(y)f^{t_2}(\nu(x),\nu(y))}
\end{equation}
Also we have 
$\mu(t_1(x)*t_1(y))=\mu(h^{t_1}(x,y)t_1(x*y))=\lambda(h^{t_1}(x,y))\mu(t_1(x*y))$
$=\lambda(h^{t_1}(x,y))\mu(t_1(x*y))=\lambda(h^{t_1}(x,y))g(x*y)t_2(\nu(x)*\nu(y))\hspace{2cm} ... (10)$

On the other hand $\mu(t_1(x)*t_1(y))=\mu(t_1(x))*\mu(t_1(y))=(g(x)t_2(\nu(x)))*(g(y)t_2(\nu(y)))$
$=\Gamma_{\nu(x)}^{t_2}(g(y))\Gamma_{\nu(y)}^{t_2}(g(x)^{-1})h^{t_2}(\nu(x),\nu(y))t_2(\nu(x)*\nu(y))\hspace{2cm} ... (11)$ 

On comparing equations (10) and (11), we have
\begin{equation}\label{9}
\bold{\lambda(h^{t_1}(x,y))g(x*y)=\Gamma_{\nu(x)}^{t_2}(g(y))}\bold {\Gamma_{\nu(y)}^{t_2}(g(x)^{-1})h^{t_2}(\nu(x),\nu(y))}
\end{equation}
Thus a morphism $(\lambda,\mu,\nu)$ between two center extensions $E(H_1, K_1)$ and $E(H_2, K_2)$ together with choices of sections $t_1$ and $t_2$ of the corresponding extensions, induces a map $g$ from $K_1$ to $H_2$  such that the triple $(\nu, g, \lambda)$ satisfies equations (\ref{7}), (\ref{8}) and (\ref{9}). It can be seen as a morphism from the factor system $(K_1,H_1,f^{t_1},h^{t_1}, \Gamma^{t_1}_{E_1})$ to $(K_2,H_2,f^{t_2},h^{t_2}, \Gamma^{t_2}_{E_2})$.

Let $(\lambda_1,\mu_1,\nu_1)$ be a morphism from an extension
\begin{center}
\begin{tikzcd}
E(H_1, K_1) \equiv   1\arrow{r} & H_1\arrow{r}{i_1} & G_1 \arrow{r}{\beta_1} & K_1\arrow{r} & 1
\end{tikzcd}
\end{center}
to an extension
\begin{center}
\begin{tikzcd}
E(H_2, K_2) \equiv   1\arrow{r} & H_2\arrow{r}{i_2} & G_2 \arrow{r}{\beta_2} & K_2\arrow{r} & 1
\end{tikzcd}
\end{center}
and $(\lambda_2,\mu_2,\nu_2)$ be another morphism from $E_2$ to
\begin{center}
\begin{tikzcd}
E(H_3, K_3) \equiv   1\arrow{r} & H_3\arrow{r}{i_3} & G_3 \arrow{r}{\beta_3} & K_3\arrow{r} & 1
\end{tikzcd}
\end{center}
Let $t_1, ~ t_2,~\text{and} ~ t_3$ be corresponding choices of sections. Then
$\mu_1(t_1(x))=g_1(x)t_2(\nu_1(x))$ and $\mu_2(t_2(x))=g_2(x)t_3(\nu_2(x))$, where $g_1:K_1 \longrightarrow H_2$ and $g_2:K_2 \longrightarrow H_3$ are uniquely determined maps same as $g$ in equation (9).
In turn, we have, 

$\mu_2(\mu_1(t_1(x)))=\mu_2((g_1(x)))\mu_2(t_2(\nu_1(x)))=\mu_2(g_1(x)))g_2(\nu_1(x))t_3(\nu_2(\nu_1(x)))$ = $\lambda_2 (g_1(x))g_2(\nu_1(x))t_3(\nu_2(\nu_1(x)))=g_3(x)t_3(\nu_2(\nu_1(x)))$, where $g_3(x) = \lambda_2 (g_1(x))g_2(\nu_1(x))$, for each $x \in K_1$. Thus the composition $(\lambda_2 \circ \lambda_1, \mu_2 \circ \mu_1,\nu_2 \circ \nu_1)$ induces the triple $(\nu_2 \circ \nu_1,g_3, \lambda_2 \circ \lambda_1)$. 

Now we introduce the category CFACS  whose objects are center factor system, and a morphism from $(K_1,H_1,f^{1},h^{1}, \Gamma^1)$ to $(K_2,H_2,f^{2},h^{2}, \Gamma^2)$ is a triple $(\nu, g, \lambda)$, where $\nu : K_1 \longrightarrow K_2$, $\lambda: H_1 \longrightarrow H_2$ are multiplicative Lie algebra homomorphism, and $g: K_1 \longrightarrow H_2$ is a map such that 
\begin{enumerate}
\item $g(1) = 1$
\item $\lambda(f^{1}(x,y))g(xy)$ = $g(x)g(y)f^{2}(\nu(x),\nu(y))$
\item $\lambda(h^{1}(x,y))g(x*y)=\Gamma_{\nu(x)}^{2}(g(y))\Gamma_{\nu(y)}^{2}(g(x)^{-1})h^{2}(\nu(x),\nu(y))$
\end{enumerate}
The composition of morphisms $(\nu_1, g_1, \lambda_1)$ from $(K_1,H_1,f^{1},h^{1}, \Gamma^{1})$ to $(K_2,H_2,f^{2},\\h^{2}, \Gamma^{2})$ and $(\nu_2, g_2, \lambda_2)$ from $(K_2,H_2,f^{2},h^{2}, \Gamma^2)$ to $(K_3,H_3,f^{3},h^{3}, \Gamma^3)$ is the triple $(\nu_2 ~ o~ \nu_1, g_3, \lambda_2 ~ o~ \lambda_1)$, where $g_3$ is given by $g_3 (x) = g_2(\nu_1 (x))\lambda_2(g_1(x))$
for each $x \in K_1$.\\
So, finally from above discussion we have the following theorem:
\begin{theorem} \label{equivalence}
There is an equivalence between the category $\mathbf{CEXT}$ of center extensions to the category $\mathbf{CFAC}$ of center factor systems.
\end{theorem}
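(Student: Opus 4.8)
The plan is to exhibit a functor $F : \mathbf{CEXT} \longrightarrow \mathbf{CFAC}$ and to show that it is full, faithful and essentially surjective; by the standard characterisation of equivalences this yields the theorem. To define $F$ on objects, fix once and for all a section $t_{E}$ for each center extension $E$ (a choice over the class of objects) and set $F(E) = \mathrm{CFac}(E, t_{E})$, which is a center factor system by \propref{center}. On a morphism $(\lambda, \mu, \nu) : E_{1} \longrightarrow E_{2}$ put $F(\lambda, \mu, \nu) = (\nu, g, \lambda)$, where $g : K_{1} \to H_{2}$ is the unique map determined by $\mu(t_{E_{1}}(x)) = g(x)\, t_{E_{2}}(\nu(x))$ as in equation (9). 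The computations preceding the statement already show that $(\nu, g, \lambda)$ satisfies the three defining conditions of a $\mathbf{CFAC}$-morphism, that the identity morphism is sent to the identity triple (for which $g \equiv 1$ by equation (7)), and that composites are respected since the induced map is $g_{3}(x) = g_{2}(\nu_{1}(x))\lambda_{2}(g_{1}(x))$; hence $F$ is a well-defined functor.

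For essential surjectivity, let $(K, H, f, h, \Gamma)$ be an arbitrary center factor system. By the converse half of \propref{center} there is a center extension $E$ with a section $t$ such that $\mathrm{CFac}(E, t) = (K, H, f, h, \Gamma)$. Since $F(E) = \mathrm{CFac}(E, t_{E})$ may use a different chosen section, I compare the two by applying the $g$-construction to the identity morphism $(I_{H}, I_{G}, I_{K}) : E \longrightarrow E$ with sections $t_{E}$ and $t$: this produces a triple $(I_{K}, g_{0}, I_{H})$ which one checks to be an isomorphism in $\mathbf{CFAC}$ from $F(E)$ to $(K, H, f, h, \Gamma)$. Thus every object of $\mathbf{CFAC}$ is isomorphic to one in the image of $F$.

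Faithfulness is immediate from unique representation: every element of $G_{1}$ is uniquely $a\, t_{E_{1}}(x)$ with $a \in H_{1}$, $x \in K_{1}$, and commutativity of the defining diagram together with $H \subseteq Z(G)$ forces $\mu(a\, t_{E_{1}}(x)) = \lambda(a)\, g(x)\, t_{E_{2}}(\nu(x))$, so $\mu$ is completely determined by the pair $(\lambda, g)$, whence $(\nu,g,\lambda)$ determines $(\lambda,\mu,\nu)$. For fullness, start from $(\nu, g, \lambda) : F(E_{1}) \longrightarrow F(E_{2})$ in $\mathbf{CFAC}$ and \emph{define} $\mu : G_{1} \to G_{2}$ by the same formula $\mu(a\, t_{E_{1}}(x)) = \lambda(a)\, g(x)\, t_{E_{2}}(\nu(x))$. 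Using $g(1)=1$ and $t(1)=1$ one checks $\mu \circ i_{1} = i_{2}\circ \lambda$ and $\beta_{2}\circ \mu = \nu \circ \beta_{1}$, so $(\lambda,\mu,\nu)$ fits the required commutative square and satisfies $F(\lambda,\mu,\nu) = (\nu,g,\lambda)$.

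The main obstacle is checking that this $\mu$ is a homomorphism of multiplicative Lie algebras, i.e. that it respects both $\cdot$ and $*$. Expanding $\mu$ of a $\cdot$-product via rule (1) and cancelling $\lambda(ab)=\lambda(a)\lambda(b)$ reduces the required identity exactly to condition (2). The analogous expansion for $*$ via rule (2) reduces, after invoking condition (3), to the compatibility $\lambda\circ\Gamma^{t_{E_{1}}}_{x} = \Gamma^{t_{E_{2}}}_{\nu(x)}\circ\lambda$ for all $x \in K_{1}$; this relation is part of the morphism data, being obtained by applying $\mu$ to $t_{E_{1}}(x)*a$ for $a \in H_{1}$ in the source extension, and it holds automatically for the triples in the image of $F$. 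Granting it, the $*$-compatibility of $\mu$ follows, $F$ is full, and being fully faithful and essentially surjective, $F$ is an equivalence of categories, which proves the theorem.
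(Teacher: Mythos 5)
Your construction follows the same route as the paper: the paper proves Theorem~\ref{equivalence} only through the discussion preceding it (objects correspond via \propref{center}, a morphism of extensions induces a triple $(\nu,g,\lambda)$ subject to \eqref{7}, \eqref{8}, \eqref{9}, and composition yields $g_3(x)=g_2(\nu_1(x))\lambda_2(g_1(x))$), and you package exactly this data as a functor $F$ and then test fullness, faithfulness and essential surjectivity, which the paper never does explicitly. Your faithfulness argument is correct (the unique decomposition $a\,t_{E_1}(x)$ together with $\mu\circ i_1=i_2\circ\lambda$ determines $\mu$ from $(\lambda,g,\nu)$; centrality of $H$ is not actually needed there), and so is essential surjectivity, once one adds the observation from \lemref{independence of section and representative} that $\Gamma$ does not depend on the section, so your comparison triple $(I_K,g_0,I_H)$ really is an isomorphism in $\mathbf{CFAC}$.

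The genuine gap is in fullness, at exactly the point you flag and then wave through. To show that your $\mu$ preserves $*$ you need $\lambda\circ\Gamma^{1}_x=\Gamma^{2}_{\nu(x)}\circ\lambda$; you assert this ``is part of the morphism data,'' but it is not: the paper's definition of a $\mathbf{CFAC}$-morphism imposes only the three listed conditions, none of which mentions $\lambda\circ\Gamma^1$. Nor is it derivable from them. Take two center factor systems with the same $K$ and $H$, trivial $f$ and $h$, and distinct homomorphisms $\Gamma^1\neq\Gamma^2: K\longrightarrow End(H)$ (such factor systems exist by the construction in Theorem~\ref{surjective map}; e.g.\ $K=H=\mathbb{Z}$ with trivial $*$ gives $Hom(K,End(H))\cong\mathbb{Z}$). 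Then $(I_K,\,g\equiv 1,\,I_H)$ satisfies all three conditions of a $\mathbf{CFAC}$-morphism, but the forced $\mu$ is the identity map of $H\times K$, which does not intertwine the two different $*$-operations; so this $\mathbf{CFAC}$-morphism is induced by no morphism of extensions, and $F$ is not full as the categories stand. Saying the relation ``holds automatically for the triples in the image of $F$'' is circular, since fullness quantifies over \emph{all} $\mathbf{CFAC}$-morphisms, not just induced ones (for induced ones it indeed holds: $\lambda(\Gamma^{t_1}_x(a))=\mu(t_1(x)*a)=(g(x)t_2(\nu(x)))*\lambda(a)=\Gamma^{t_2}_{\nu(x)}(\lambda(a))$, because $g(x)$ is central and $*$ is trivial on $H_2$). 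The repair is to add $\lambda(\Gamma^{1}_x(a))=\Gamma^{2}_{\nu(x)}(\lambda(a))$ to the definition of $\mathbf{CFAC}$-morphisms, the precise analogue of condition (3) in the definition of $\mathbf{LFAC}$-morphisms in Section 3, after which your fullness computation closes. To be fair, this omission sits in the paper's definition as much as in your proof: the paper's implicit argument has the same hole, and your write-up at least isolates the exact compatibility on which the theorem hinges.
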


Now we extend the Lemma 2.2 to illustrate the equivalence classes of center extensions.
\begin{lemma} \label{independence of section and representative}
 Let $E(H, K)$ be an center extension and $t$ be a section of $E(H, K)$. Then the map $\Gamma^t_E$ is independent of choice of section as well as choice of the representative $E$ of the equivalence class.
\end{lemma}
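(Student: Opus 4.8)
The plan is to prove the two independence assertions in turn, each reducing to the two defining features of a center extension: that $H$ lies in $Z(G)$ and that $H$ carries the trivial multiplicative Lie algebra structure.

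For independence of the section, I would fix two sections $t, t'$ of a single center extension $E(H,K)$. Since $\beta(t(x)) = x = \beta(t'(x))$, the difference $t(x)(t'(x))^{-1}$ lies in $\ker\beta = H$, so I may write $t(x) = a(x)\,t'(x)$ with $a(x)\in H$. Evaluating $\Gamma^t_x(h) = t(x)*h$ via the identity $(uv)*w = {}^{u}(v*w)(u*w)$ employed earlier in the proof of Lemma~2.2, the central position of $a(x)$ makes the conjugation ${}^{a(x)}(t'(x)*h)$ collapse to $t'(x)*h$, while the triviality of $*$ on $H$ forces $a(x)*h = 1$. Hence $\Gamma^t_x(h) = t'(x)*h = \Gamma^{t'}_x(h)$ for all $x, h$, so $\Gamma^t_E = \Gamma^{t'}_E$.

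For independence of the representative, I would take equivalent center extensions $E_1(H,K)$ and $E_2(H,K)$ with equivalence isomorphism $\phi\colon G_1\to G_2$, so that $\phi \circ i_1 = i_2$ and $\beta_2 \circ \phi = \beta_1$. Given a section $t_1$ of $E_1$, the composite $t_2 := \phi \circ t_1$ is a section of $E_2$, since $\beta_2(t_2(x)) = \beta_1(t_1(x)) = x$. Then $\Gamma^{t_2}_x(h) = \phi(t_1(x)) * h = \phi(t_1(x)) * \phi(h)$ because $\phi$ restricts to the identity on $H$; as $\phi$ is a multiplicative Lie algebra homomorphism this equals $\phi(t_1(x)*h) = \phi(\Gamma^{t_1}_x(h))$, and since $\Gamma^{t_1}_x(h) \in H$ is fixed by $\phi$, I obtain $\Gamma^{t_2}_x = \Gamma^{t_1}_x$. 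Combining this with the section-independence already established, the homomorphism $\Gamma^t_E$ depends only on the equivalence class of $E$.

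The computations themselves are short; the step demanding the most care is the correct bookkeeping of the inclusion maps $i_1, i_2$ when identifying $H$ with its images in $G_1$ and $G_2$, since the two simplifications driving the entire argument --- the triviality of conjugation by elements of $H$ and the vanishing of $*$ on $H$ --- rely precisely on $H$ sitting centrally with trivial Lie structure inside each $G$. Once this identification is made explicit, no genuine obstacle remains.
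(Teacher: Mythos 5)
Your proposal is correct and follows essentially the same route as the paper: writing one section as an $H$-valued shift of the other and using centrality of $H$ together with the triviality of $*$ on $H$, then transporting along the equivalence $\phi$ (which fixes $H$ pointwise) via $\phi\circ t$. The only cosmetic difference is that the paper compares an arbitrary section $t'$ of $E'$ directly with $\phi\circ t$ through a shift map $g$ inside the second computation, whereas you take $\phi\circ t_1$ itself as the section of $E_2$ and then invoke the already-proved section-independence --- logically the same argument.
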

\begin{proof}
Let s be an another section of $E(H, K)$ and $\Gamma^{s}_{E} : K \longrightarrow End(H)$ be the corresponding multiplicative Lie algebra homomorphism . Since $s$ and $t$ are two sections of $E(H, K)$, their exists a map $g: K\rightarrow H$ with $g(1)=1$ such that $s(x)=g(x)t(x)$ for all $ x \in K$. Therefore for $x \in K$, $\Gamma^s_x(h) = s(x)*h = (g(x)t(x))*h = ^{g(x)}(t(x)*h) (g(x)*h) = t(x)*h = \Gamma^t_x (h)$. Thus $\Gamma^{s}_{E} (x) = \Gamma^{t}_{E} (x)$ for all $x \in K$. This shows that $\Gamma^t_E$ is independent on section $t$. 

Now let $E'(H, K)$ be an equivalent center extension to $E(H, K)$. Let $t'$ be a section of $E'(H, K)$, $\Gamma^{t'}_{E'} : K \longrightarrow End(H)$ be the corresponding multiplicative Lie algebra homomorphism and $(I_H, \phi, I_K)$ be an equivalence from $E(H, K)$ to $E'(H, K)$.

Since $t$ is section of $E(H, K)$,  $\phi \circ t$ is a section of $E'(H, K)$. Therefore their exists a map $g$ from $K$ to $H$ satisfying $g(x)\phi(t(x)) = t'(x)$. Now we have
$\Gamma^{t'}_x(h) = t'(x)*h = (g(x)\phi(t(x)))*h = ^{g(x)}(\phi(t(x))*h) (g(x)*h) = \phi(t(x))*h = \phi(t(x))*\phi(h) = \phi(t(x)*h) = \phi(\Gamma^t_x (h)) = \Gamma^t_x (h)$ (since $\phi (h) = h$ for all $h \in H$). Thus $\Gamma^{t'}_{E'} (x) = \Gamma^{t}_{E} (x)$ for all $x \in K$.
Therefore $\Gamma^t_{E}$ is independent of choice of sections and the equivalent extensions.

Now onwards, for a center extension $E(H, K)$, we denote without any ambiguity, $\Gamma^t_E$ by $\Gamma_{[E]}$. 
\end{proof}
\begin{theorem} \label{surjective map}
Let $CExt(H, K)$ denote the set of all equivalence classes of center extensions of $H$ by $K$ and $Hom(K, End(H))$ denote the set of all multiplicative Lie algebra homomorphism from $K$ to $End(H)$. Then there is a natural surjective map $\eta : CExt(H, K) \longrightarrow Hom(K, End(H))$ defined by $\eta([E]) = \Gamma_{[E]}$.
\end{theorem}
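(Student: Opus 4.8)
The plan is to dispose of well-definedness quickly and then concentrate on surjectivity, which is the real content. Well-definedness of $\eta$ is immediate from \lemref{independence of section and representative}: that lemma shows $\Gamma^t_E$ depends neither on the chosen section $t$ nor on the representative $E$ of the equivalence class, which is precisely what licenses the notation $\Gamma_{[E]}$ and makes $\eta([E]) = \Gamma_{[E]}$ a genuine function on $CExt(H,K)$.

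For surjectivity I would fix an arbitrary multiplicative Lie algebra homomorphism $\Gamma : K \to End(H)$ and produce a center extension whose invariant is $\Gamma$. The natural candidate is the trivial factor system: take $f \equiv 1$ and $h \equiv 1$ (the constant maps $K \times K \to H$ with value the identity of $H$) and pair them with the given $\Gamma$. I would then verify that the quintuple $(K,H,1,1,\Gamma)$ satisfies the defining conditions $(\ref{1})$, $(\ref{2})$, $(\ref{3})$, $(\ref{4})$, $(\ref{5})$, $(\ref{6})$ of \propref{center}'s setup (Definition \ref{center factor system}). Conditions $(\ref{1})$ and $(\ref{2})$ hold on the nose. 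In each of $(\ref{3})$ through $(\ref{6})$ every factor is either a product of values of $f$ and $h$ (all equal to $1$) or the image under some $\Gamma_x$ of such a product; since each $\Gamma_x$ is a group endomorphism we have $\Gamma_x(1)=1$, so every term collapses and both sides of each identity reduce to $1$. The only hypothesis used is that $\Gamma$ is a multiplicative Lie algebra homomorphism, which guarantees $\Gamma_x \in End(H)$ for every $x$.

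Having confirmed that $(K,H,1,1,\Gamma)$ is a center factor system, I would invoke the converse half of \propref{center} to obtain a center extension $E(H,K)$ together with a section $t$ for which $\text{CFac}(E,t) = (K,H,1,1,\Gamma)$; in particular $\Gamma^t_E = \Gamma$. Concretely this is the extension carried by $G = H \times K$ with $(a,x)\cdot(b,y) = (ab,\,xy)$ and $(a,x)*(b,y) = (\Gamma_x(b)\Gamma_y(a^{-1}),\,x*y)$ and section $t(x)=(1,x)$, for which the computation of $\Gamma^t_E$ is exactly the one already recorded in \propref{center}. Then $\eta([E]) = \Gamma_{[E]} = \Gamma^t_E = \Gamma$, so $\Gamma$ lies in the image and $\eta$ is onto.

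I do not anticipate a genuine obstacle: the substantive work has already been absorbed into \propref{center} and \lemref{independence of section and representative}. The only point demanding attention is the bookkeeping in $(\ref{5})$ and $(\ref{6})$, where one must check that the arguments fed into the various $\Gamma_x$ are honestly assembled out of $f$- and $h$-values, so that they are trivial and the corresponding $\Gamma_x$-images vanish; once that is seen, the identities are satisfied automatically and the construction goes through.
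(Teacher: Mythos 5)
Your proposal is correct and follows essentially the same route as the paper: both establish well-definedness via \lemref{independence of section and representative} and prove surjectivity by realizing a given $\Gamma$ through the product $G = H \times K$ with $(a,x)\cdot(b,y)=(ab,xy)$, $(a,x)*(b,y)=(\Gamma_x(b)\Gamma_y(a^{-1}),x*y)$ and section $t(x)=(1,x)$, yielding trivial $f^t$, $h^t$ and $\Gamma_E=\Gamma$. Your extra step of checking that the trivial pair $f\equiv 1$, $h\equiv 1$ satisfies conditions $(\ref{1})$--$(\ref{6})$ before invoking \propref{center} is just a slightly more explicit packaging of what the paper asserts directly.
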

\begin{proof}
From the Lemma \ref{independence of section and representative}, the map $\eta :CExt(H, K) \rightarrow Hom(K, End(H))$ defined by $\eta([E]) = \Gamma_{[E]}$ is well defined. 
 
Let $\Gamma : K\longrightarrow End(H)$ is a multiplicative Lie algebra homomorphism defined by $\Gamma(x) = \Gamma_x$. Let $G = H \times K$. Then $G$ is a multiplicative Lie algebra with the operations $\cdot $ and $*$, defined by $(h, x)\cdot (k, y) = (hk,xy)$ and $(h, x)*(k, y) = (\Gamma_x(k)\Gamma_y(h^{-1}), x*y)$ respectively. Consider a section $t : K \longrightarrow G$ is defined by $t(x) = (1, x)$. Then $t$ is a group homomorphism. Therefore we have a center extension $E$ of multiplicative Lie algebra of $H$ by $K$
\begin{center}
\begin{tikzcd}
   1\arrow{r} & H\arrow{r}{\alpha} & G \arrow{r}{\beta} & K\arrow{r} & 1
\end{tikzcd}
\end{center}
such that $f^t$ and $h^t$ are trivial maps and $\Gamma_E = \Gamma$.
Thus every multiplicative Lie algebra homomorphism from $K\longrightarrow End(H)$ gives a  center extension of multiplicative Lie algebra of $H$ by $K$. This shows that $\eta$ is surjective.
\end{proof}
\begin{example}
The map $\eta$ in Theorem $\ref{surjective map}$ need not be injective. Specifically, there may exists same multiplicative Lie algebra homomorphism  $\Gamma$ from $K$ to $End(H)$ for two distinct classes of extensions. Consider the center extensions:
\begin{center}
\begin{tikzcd}
E_1(\mathbb{Z},\mathbb{Z}_5) \equiv   \lbrace 0\rbrace \arrow{r} & \mathbb{Z}\arrow{r}{(i_1,o)} & \mathbb{Z}\bigoplus \mathbb{Z}_5 \arrow{r}{p_2} & \mathbb{Z}_5\arrow{r} &\lbrace 0\rbrace
\end{tikzcd}
\end{center}
and
\begin{center}
\begin{tikzcd}
E_2(\mathbb{Z},\mathbb{Z}_5) \equiv   \lbrace 0\rbrace\arrow{r} & \mathbb{Z}\arrow{r}{\alpha} & \mathbb{Z} \arrow{r}{\nu} & \mathbb{Z}_5\arrow{r} & \lbrace 0\rbrace
\end{tikzcd}
\end{center}
of the multiplicative Lie algebra $\mathbb{Z}$ by $\mathbb{Z}_5$, where $\alpha(n)=5n, ~ \forall ~ n \in \mathbb{Z}$ and $\nu$ is natural quotient map. Since $\mathbb{Z}$ and $\mathbb{Z}_5$ are cyclic groups, the multiplicative Lie algebra structure on these groups is trivial. Also it can be seen that $E_1(\mathbb{Z},\mathbb{Z}_5)$ and $E_2(\mathbb{Z},\mathbb{Z}_5)$ are not equivalent as center extension. Since $End(\mathbb{Z})$ is isomorphic to $\mathbb{Z}$, $Hom(\mathbb{Z}_5, \mathbb{Z})$ is trivial. Therefore here it can be seen that $[E_1]\neq[E_2]$ but $\eta([E_1])=\eta([E_2])$.
\end{example}
Now we discuss the following problem:

\noindent\textbf{Problem:} Let $H$ be an abelian group with trivial multiplicative Lie algebra structure. Classify all center extensions of $H$ by $K$ (up to equivalence) with the given 
multiplicative Lie algebra homomorphism $\Gamma : K \longrightarrow End(H)$
\begin{definition}  
A multiplicative center 2-cocycle of a multiplicative Lie algebra $K$ with coefficient in an abelian group $H$ with trivial multiplicative Lie algebra structure is a triplet $(f,h,\Gamma)$, where $f \in Z^2(K,H)$ is a group 2-cocycle of $K$ with coefficient in the trivial $K$-module $H$, $ \Gamma:K\longrightarrow End(H)$ is a multiplicative Lie algebra homomorphism and $h:K \times K \longrightarrow H$ is a map which satisfying the conditions like equations $(\ref{1})$, $(\ref{2})$, $(\ref{3})$, $(\ref{4})$, $(\ref{5})$ and $(\ref{6})$, where we replace $f^t$ and $h^t$ with $f, ~ h$ respectively.
\end{definition}
\begin{remark} \label{multiplicative center cycle}
Let $E(H, K)$ be a center extension with a choice of section $t$. Then from above discussion, it is clear that we have a multiplicative center 2-cocycle $(f^t,h^t,\Gamma_E)$.

Conversely, if we have a multiplicative center 2-cocycle  $(f,h, \Gamma)$ of multiplication Lie Algebra $K$ with coefficient in the abelian group $H$ with trivial multiplicative Lie algebra structure, then from Proposition $(\ref{center})$ it can be seen that there exists a multiplicative Lie algebra $G$ and a center extension $E(H, K)$ with a choice of section $t$ such that $f^t=f$, $\Gamma_{E}=\Gamma$ and $h^t=h$.
\end{remark}
Let $E(H, K)$ be a center extension with a choice of section $t$. Then we have a multiplicative center 2-cocycle $(f^t,h^t,\Gamma_E)$.
Now, let s be an another section of $E$. Then their exists a map $g: K\rightarrow H$ with $g(1)=1$ such that $s(x)=g(x)t(x)$ for all $ x \in K$. So 
$$(hs(x))\cdot (ks(y))=hkf^s(x,y)s(xy)=hkf^s(x,y)g(xy)t(xy) \cdots (12)$$
On the other hand
$$(hs(x))\cdot (ks(y))=(hg(x)t(x))\cdot (kg(y)t(y))=hkg(x)g(y)f^t(x,y)t(xy) \cdots (13)$$
On comparing equations $(12)$ and $(13)$
$$ \bold{f^s(x,y)=g(x)g(xy)^{-1}g(y)f^t(x,y)}$$

Consider the expression

$(hs(x))*(ks(y))=\Gamma_x(k)\Gamma_y(h^{-1})h^s(x,y)s(x*y)=\Gamma_x(k)\Gamma_y(h^{-1})h^s(x,y)g(x*y)t(x*y)... (14)$

On the other hand 

$(hs(x))*(ks(y))=(hg(x)t(x))*(kg(y)t(x))
= \Gamma_x(kg(y))\Gamma_y(h^{-1}g(x)^{-1})h^t(x,y)t(x*y)... (15)$

On comparing equations $(14)$ and $(15)$ we have
$$\bold{h^s(x,y)=\Gamma_x(g(y))\Gamma_y(g(x)^{-1})g(x*y)^{-1}h^t(x,y)}$$

 This motivates us for the following definition:
\begin{definition}
Two multiplicative center 2-cocycle $(f^s,h^s,\Gamma)$ and $(f^t,h^t,\Gamma)$ are said to be equivalent if their exists an identity preserving map $g:K\longrightarrow H$ satisfying:
\begin{enumerate}
\item $f^s(x,y)=g(x)g(y)g(xy)^{-1}f^t(x,y)$
\item $h^s(x,y)=\Gamma_x(g(y))\Gamma_y(g(x)^{-1})g(x*y)^{-1}h^t(x,y)$
\end{enumerate}
\end{definition}
The set $Z^2_{ML(\Gamma)}(K,H)$ of all multiplicative center 2-cocycles of $K$ with coefficient in $H$ is easily seen to be an abelian group with respect to coordinate wise operation given by $(f,h, \Gamma) \cdot (f^{'},h^{'}, \Gamma)=(ff^{'},hh^{'}, \Gamma)$. Given any identity preserving map $g:K \longrightarrow H$, the triplet $(\delta g,g^{*}, \Gamma)$  is a member of $Z^2_{ML(\Gamma)}(K,H)$, where $\delta g,g^{*}$ are maps from $K \times K$ to $H$ given by
$\delta g((x,y))=g(y)g(xy)^{-1}g(x)$ and $g^*(x,y)=\Gamma_x(g(y))\Gamma_y(g(x)^{-1})g(x*y)^{-1}$.

Let $MAP(K,H)$ denote the group of identity preserving map from $K$ to $H$. So we have a homomorphism $\chi:MAP(K,H)\longrightarrow Z^2_{ML(\Gamma)}(K,H)$ given by $\chi(g)=(\delta g,g^*, \Gamma)$.

The image of $\chi$ is called the group of multiplicative center 2-coboundaries of $K$ with coefficient in $H$ and it is denoted by $B^2_{ML(\Gamma)}(K,H)$. The quotient group $\frac{Z^2_{ML(\Gamma)}(K,H)}{ B^2_{ML(\Gamma)}(K,H)}$ is called the second center cohomology of $K$ with coefficient in $H$ and it is denoted by $H^2_{ML(\Gamma)}(K,H)$. In turn, we get the following exact sequence of abelian groups,
$ 1\rightarrow  Hom(K,H)\xrightarrow{i} MAP(K,H)\xrightarrow{\chi} Z^2_{ML(\Gamma)}(K,H)\xrightarrow{\nu} H^2_{ML(\Gamma)}(K,H)\rightarrow 1$, 
where $\nu$ is quotient map.
\begin{theorem} \label{Bijective correspondence}
Let $H$ be an abelian group with trivial multiplicative Lie algebra structure, $K$ be a multiplicative Lie algebra and $\Gamma$ be a multiplicative Lie algebra homomorphism from $K$ to $End(H)$. Then there is a natural bijective correspondence between the set $CExt_\Gamma (H,K)$ of equivalence classes of center extensions of $H$ by $K$ with given $\Gamma$ and the second Lie co- homology $H^2_{ML(\Gamma)}(K,H)$.  
\end{theorem}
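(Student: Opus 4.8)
The plan is to construct a map $\Theta : CExt_\Gamma(H,K) \longrightarrow H^2_{ML(\Gamma)}(K,H)$ and show it is a well-defined bijection. Given an equivalence class $[E]$ of center extensions with $\Gamma_{[E]} = \Gamma$, I would choose a section $t$ of $E$; by Remark \ref{multiplicative center cycle} this yields a multiplicative center $2$-cocycle $(f^t,h^t,\Gamma) \in Z^2_{ML(\Gamma)}(K,H)$, and I set $\Theta([E])$ equal to its class in the quotient $H^2_{ML(\Gamma)}(K,H) = Z^2_{ML(\Gamma)}(K,H)/B^2_{ML(\Gamma)}(K,H)$.

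First I would verify that $\Theta$ is well defined, which splits into two independence checks. For independence of the section: if $s$ is a second section with $s(x)=g(x)t(x)$ and $g(1)=1$, then the computations leading to equations $(12)$--$(15)$ show precisely that $f^s = (\delta g)\,f^t$ and $h^s = g^*\,h^t$, that is $(f^s,h^s,\Gamma) = (\delta g, g^*, \Gamma)(f^t,h^t,\Gamma) = \chi(g)(f^t,h^t,\Gamma)$. Hence the two cocycles differ by the coboundary $\chi(g)\in B^2_{ML(\Gamma)}(K,H)$ and represent the same cohomology class. For independence of the representative: if $E'$ is equivalent to $E$ via $(I_H,\phi,I_K)$ and $t$ is a section of $E$, then $\phi\circ t$ is a section of $E'$; since $\phi$ fixes $H$ pointwise and commutes with the projections, a direct check gives $f^{\phi\circ t}=f^t$ and $h^{\phi\circ t}=h^t$, while $\Gamma_{[E']}=\Gamma_{[E]}=\Gamma$ by Lemma \ref{independence of section and representative}. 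So the same cocycle, and a fortiori the same class, is obtained.

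Surjectivity is immediate from the converse half of Proposition \ref{center} (equivalently Remark \ref{multiplicative center cycle}): any class in $H^2_{ML(\Gamma)}(K,H)$ has a representative cocycle $(f,h,\Gamma)$, which is realized by the center extension built on $G=H\times K$ with operations $(a,x)\cdot(b,y)=(abf(x,y),xy)$ and $(a,x)*(b,y)=(\Gamma_x(b)\Gamma_y(a^{-1})h(x,y),x*y)$ and section $t(x)=(1,x)$; this extension has $\Gamma_E=\Gamma$, so its class lies in $CExt_\Gamma(H,K)$ and maps to the prescribed class.

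The crux is injectivity. Suppose $\Theta([E])=\Theta([E'])$. Choosing sections $t,t'$ and invoking Proposition \ref{center}, I may take $E,E'$ to be the standard extensions on $H\times K$ attached to $(f^t,h^t,\Gamma)$ and $(f^{t'},h^{t'},\Gamma)$. The hypothesis says these cocycles are cohomologous, so there is an identity-preserving $g:K\to H$ with $f^{t'}=(\delta g)\,f^t$ and $h^{t'}=g^*\,h^t$. I would then define $\phi:H\times K\to H\times K$ by $\phi(a,x)=(a\,g(x),x)$ and verify it is an isomorphism of multiplicative Lie algebras intertwining both extensions: that $\phi$ respects $\cdot$ uses exactly $f^{t'}=(\delta g)\,f^t$, and that it respects $*$ uses $h^{t'}=g^*\,h^t$ together with the fact that each $\Gamma_x$ is a group endomorphism. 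Since $g(1)=1$, $\phi$ fixes $H=H\times\{1\}$ and covers $I_K$, so $(I_H,\phi,I_K)$ is an equivalence and $[E]=[E']$. The main obstacle is this last verification that $\phi$ preserves the Lie operation $*$: it is the step where the precise coboundary $g^*(x,y)=\Gamma_x(g(y))\Gamma_y(g(x)^{-1})g(x*y)^{-1}$ must be matched against the twisted product formula $(2)$, and where the homomorphism property of $\Gamma$ is essential. Naturality of the correspondence then follows by tracking a morphism of extensions through the induced morphism of factor systems described before Theorem \ref{equivalence}.
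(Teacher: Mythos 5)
Your proposal is correct and follows the same overall skeleton as the paper's proof: define the map by sending $[E]$ (via a choice of section) to the class of $(f^t,h^t,\Gamma)$, get surjectivity from Remark \ref{multiplicative center cycle}/Proposition \ref{center}, and get injectivity from the fact that cohomologous cocycles yield equivalent extensions. The differences are in how the two endpoints are handled, and yours are improvements in rigor. For well-definedness, the paper asserts in one stroke that equivalent extensions with sections $t,t'$ produce cocycles differing by a coboundary; your split into section-independence (via the computations giving $f^s=(\delta g)f^t$, $h^s=g^*h^t$) and representative-independence (via $\phi\circ t$ and Lemma \ref{independence of section and representative}) is cleaner and actually covers the same ground. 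For injectivity, the paper stops at ``the factor systems are equivalent, hence the extensions are equivalent,'' implicitly delegating to the categorical equivalence of Theorem \ref{equivalence} (itself only sketched); you instead construct the intertwiner explicitly on $H\times K$, which makes the argument self-contained — note only that you must first record the routine fact that any center extension is equivalent to the standard one built from its own factor system (via $ht(x)\mapsto(h,x)$), which you invoke when you ``may take $E,E'$ to be the standard extensions.'' One small convention check: with the paper's coboundary $\delta g(x,y)=g(y)g(xy)^{-1}g(x)$ and $g^*(x,y)=\Gamma_x(g(y))\Gamma_y(g(x)^{-1})g(x*y)^{-1}$, your map should be $\phi(a,x)=(a\,g(x)^{-1},x)$ rather than $(a\,g(x),x)$ (equivalently, replace $g$ by $g^{-1}$, harmless since $H$ is abelian and $\delta$, $(\cdot)^*$ invert accordingly); with that adjustment both verifications for $\cdot$ and $*$ go through exactly as you describe, using that each $\Gamma_x$ is an endomorphism of $H$.
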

\begin{proof}
Let $E(H, K)~ \text{and} ~ E'(H, K) $ be two equivalent center extensions of $H$ by $K$ with multiplicative Lie algebra homomorphism $\Gamma$ and sections $t, ~ t'$ respectively. Let $(f^{t},h^{t}, \Gamma)$ and $(f^{t'},h^{t'}, \Gamma)$ be the multiplicative center 2-cocycles corresponding to center extensions $E(H, K)$ and $E'(H, K)$ respectively. Then their exists a map $g$ from $K$ to $H$ with $g(1)=1$ such that $f^{t'}(x,y)=g(x)g(xy)^{-1}g(y)f^t(x,y)$ and $h^{t'}(x,y)=\Gamma_x(g(y))\Gamma_y(g(x)^{-1})g(x*y)h^t(x,y)$.

This shows that $(f^t,h^t,\Gamma)B^2_{ML(\Gamma)}(K,H)=(f^{t'},h^{t'},\Gamma)B^2_{ML(\Gamma)}(K,H)$. Therefore we have a map $\Phi$ from $CExt_\Gamma (H,K)$ to $H^2_{ML(\Gamma)}(K,H)$ defined by $\Phi ([E])=(f^t,h^t,\Gamma) \\ B^2_{ML(\Gamma)}(K,H)$, where $t$ is a section of $E(H, K)$. Let $(f,h,\Gamma)~ \in Z^2_{ML(\Gamma)}(K,H)$. Then by the Remark $(\ref{multiplicative center cycle})$, we have a center extension $E(H, K)$ of $H$ by $K$ and section $t$ such that $f^t=f$, $h^t=h$ and $\Gamma_E=\Gamma$. This shows that $\Phi$ is surjective. Let $E(H, K)$ and $E'(H, K)$ be two center extensions of $H$ by $K$, with sections $t$ and $t'$ respectively such that $\Phi([E])=\Phi([E'])$, then   $(f^t,h^t,\Gamma)B^2_{ML(\Gamma)}(K,H)=(f^{t'},h^{t'},\Gamma)B^2_{ML(\Gamma)}(K,H)$. Hence again their exists $g$ from $K$ to $H$ with $g(1)=1$ satisfying $f^{t'}(x,y)=g(x)g(xy)^{-1}g(y)f^t(x,y)$ and $h^{t'}(x,y)=\Gamma_x(g(y))\Gamma_y(g(x)^{-1})g(x*y)h^t(x,y)$. It follows that $(f^{t},h^{t}, \Gamma)$ and $(f^{t'},h^{t'}, \Gamma)$ are equivalent. Therefore the center factor systems $(K, H, f^{t},h^{t}, \Gamma)$ and $(K, H, f^{t'},h^{t'}, \Gamma)$ are equivalent and hence  $E(H, K)$ and $E'(H, K)$ are equivalent. Thus $\Phi$ is injective.
\end{proof}
\begin{remark}
Let $(G_1,\circ_1,*_1)$ and $(G_2,\circ_2,*_2)$ be two multiplicative Lie algebras. Then $G = G_1 \times G_2$ is also a multiplicative Lie algebra with operations $\cdot$ and $*$ given by 
\begin{center}
$(g_1,g_2)\cdot (g'_1,g'_2)=(g_1\circ_1 g'_1,g_2\circ_2 g'_2)$ and
$(g_1,g_2)* (g'_1,g'_2)=(g_1 *_1 g'_1,g_2 *_2 g'_2)$
\end{center}
 \end{remark}
\noindent\textbf{Baer sum on the class of center extensions-:} Now we will define Baer sum on class of center extensions of multiplicative Lie algebras analogues to class of group extensions. Let  
\begin{center}
\begin{tikzcd}
  E_1(H, K) \equiv 1\arrow{r} & H\arrow{r}{i_1} & G \arrow{r}{\beta_1} & K\arrow{r} & 1
\end{tikzcd} 
\end{center}
and 
\begin{center}
\begin{tikzcd}
  E_2(H, K) \equiv 1\arrow{r} & H\arrow{r}{i_2} & G' \arrow{r}{\beta_2} & K\arrow{r} & 1
\end{tikzcd} 
\end{center}
be two center extensions of $H$ by $K$ with the corresponding fixed multiplicative Lie algebra homomorphism $\Gamma : K \longrightarrow End(H)$. Consider $L=\lbrace (g_1,g_2):g_1 \in G, g_2 \in G',\beta_1(g_1)=\beta_2(g_2)\rbrace$ and $D=\lbrace (h,h^{-1}):h \in H \rbrace$. Then it is easy to verify that $L$ is a subalgebra of $G_1 \times G_2$ and $D$ is an ideal of $L$. Also we get a center extension $E_1(H,K)\biguplus E_2(H,K)$ of $H$ by $K$.
\begin{center}
\begin{tikzcd}
  E_1(H, K)\biguplus E_2(H,K) \equiv 1\arrow{r} & H\arrow{r}{\eta} & \bar{G} \arrow{r}{\bar{\chi}} & K\arrow{r} & 1
\end{tikzcd} 
\end{center}
where $\bar{G}=\frac{L}{D}$, $\eta (h)=(h,1)D$ and $\bar{\chi}((g_1,g_2)D)=\beta_1(g_1)$ are multiplicative Lie algebra homomorphisms (for the group structure details, see pages 388 \& 389 of \cite{RL}). Further, let $t_1$ be a section of $E_1(H,K)$ and $t_2$ be section of $E_2(H,K)$ with corresponding multiplicative center 2-cocycles $(f^{t_1}, h^{t_1},\Gamma)$ and $(f^{t_2},h^{t_2},\Gamma)$ respectively. Then we have a section $t_1+t_2$ of $E_1(H, K)\biguplus E_2(H,K)$ given by $(t_1+t_2)(x)=(t_1(x),t_2(x))D$. 

\textbf{Claim:} $f^{t_1+t_2}=f^{t_1}f^{t_2}$, $h^{t_1+t_2}=h^{t_1}h^{t_2}$ and $\Gamma_{E_1\biguplus E_2}=\Gamma$.

Let $x,y \in K$. Then 
$\eta(h^{t_1+t_2}(x,y))=((t_1+t_2)(x)*(t_1+t_2)(y))((t_1+t_2)(x*y))^{-1}\Rightarrow (h^{t_1+t_2}(x,y), 1)D = ((t_1(x),t_2(x))*(t_1(y),t_2(y)))((t_1(x*y))^{-1},(t_2(x*y))^{-1})D=((t_1(x)*t_1(y))(t_1(x*y))^{-1},(t_2(x)*t_2(y))(t_2(x*y))^{-1})D=(h^{t_1}(x,y),h^{t_2}(x,y))D \Rightarrow (h^{t_1+t_2}(x,y),1)D=(h^{t_1}(x,y),h^{t_2}(x,y))D \Longrightarrow (h^{t_1+t_2}(x,y)(h^{t_1}(x,y))^{-1},(h^{t_2}(x,y))^{-1}) \\\in D$. Therefore $h^{t_1+t_2}=h^{t_1}h^{t_2}.$

Similarly, we can show that $f^{t_1+t_2}=f^{t_1}f^{t_2}$.

Also $\eta(\Gamma_x^{t_1+t_2}(h))=(t_1+t_2)(x)*\eta(h)=((t_1(x),t_2(x))*(h,1))D  =(t_1(x)*h,1)D=(\Gamma_x(h),1)D \Rightarrow (\Gamma^{t_1+t_2}(h), 1)D = (\Gamma_x(h),1)D$. Hence $\Gamma_{E_1\biguplus E_2}=\Gamma$.
 
 Therefore  $CExt_\Gamma(H,K)$ is an abelian group with respect to the Baer sum and the bijective map $\Phi$ defined in Theorem $(\ref{Bijective correspondence})$ from the set $CExt_\Gamma(H,K)$ to $H^2_{ML(\Gamma)}(H,K)$ is an isomorphism. 
 
 \begin{example}
Consider the dihedral group $D_4 = \{<a, b> \mid a^2 = 1 = b^4, aba = b^{-1}\}$, let $H= \lbrace 1, b^2\rbrace $ and $K = \frac{D_4}{H}$. Define the multiplicative Lie algebra structure $*$ on $D_4$ by, $a*a=b*b=1$, and $a*b=b$. Then $(D_4,\cdot ,*)$ forms a multiplicative Lie algebra and the induced structure on $K$ is given by $aH*aH=bH*bH=H$, and $aH*bH=bH$.
So, finally we have a center extension 
\begin{center}
\begin{tikzcd}
   1\arrow{r} & H\arrow{r}{i} & D_4 \arrow{r}{\nu} & K\arrow{r} & 1
\end{tikzcd}
\end{center}
of $H$ by $K$. Let $t:K\longrightarrow D_4$ be a section of the above extension defined by $t(H)=e$, $t(aH)=a$, $t(bH)=b^3$, and $t(abH)=ab$.  \\
Then the multiplicative center 2-cocycle $(f^t, h^t, \Gamma)$ are as follows:\\
$f^t(x, y)=  \begin{cases} 
      1 & (x,y) \notin \{(aH,abH), (aH,bH),(bH,abH)\}\\
      b^2 & (x,y)\in \{(aH,bH),(bH,abH),(aH,abH)\} 
   \end{cases}$, 
  \\
   $h^t(x, y)=  \begin{cases} 
      1 & (x,y) \notin \{(bH,aH),(abH,aH),(bH,abH)\}\\
      b^2 & (x, y)\in \{(bH,aH),(abH,aH),(bH,abH)\} 
   \end{cases}$
   \\
$\Gamma(x)=  \begin{cases} 
      0_H & x \in \{H, bH\}\\
      I_H & x \in \{aH, abH\} 
   \end{cases}$ \\
where $0_H$ and $I_H$ denotes the zero homomorphism and identity   homomorphism on $H$ respectively.
\end{example}
\section{Schreier Theory for Lie Center Extension} 
In this section, we discuss the theory of Lie center extension. 
\begin{definition}
An extension \begin{tikzcd}
  E(H, K) \equiv 1\arrow{r} & H\arrow{r}{i} & G \arrow{r}{\beta} & K\arrow{r} & 1
\end{tikzcd} of an abelian group $H$ with trivial multiplicative Lie algebra structure by a multiplicative Lie algebra $K$ is called a \textbf{Lie center extension} if $H$ is contained in the Lie center $LZ(G) (=\lbrace x \in G : x*y=1, \forall y \in G \rbrace )$ of $G$.
\end{definition}
Let 
\begin{tikzcd}
  E(H, K) \equiv 1\arrow{r} & H\arrow{r}{i} & G \arrow{r}{\beta} & K\arrow{r} & 1
\end{tikzcd} be a Lie center extension of $H$ by $K$ and $t : K \longrightarrow G$ be a section of $E(H, K)$. Then every element of group $G$ can be uniquely expressed in the form $ht(x)$, for some $h \in H$ and $x \in K$. 

The group operation $``\cdot" $ (for details see \cite{RL}) in $G$ is given by $$\mathbf{(ht(x))\cdot (kt(y))=h \sigma_x(k)f^t(x,y)t(xy)} ... (16)$$ where $\sigma_x^t : H \longrightarrow H$ is an automorphism defined by $\sigma_x^t(k) = t(x)kt(x)^{-1}$ and $f^t : K \times K \rightarrow H$ is a map satisfying 
\begin{equation}\label{10}
\mathbf{f^t(1,x)=f^t(x,1)=1 ~\text{and}~ f^t(x,y)f^t(xy,z) = \sigma_x^t(f^t(y,z))f^t(x,yz)}
\end{equation}
\begin{remark}  Since $H$ is abelian, $Aut(H) = Out(H)$. 
The map $\sigma_E^t : K \longrightarrow Aut(H)$ defined by $\sigma_E^t (x) = \sigma_x^t$ is independent of the section $t$ and the representative of extension $E(H, K)$ of the equivalence class (see (\cite{RL}, Page 378)). So further in this paper we will use $\sigma_x$ in place of $\sigma_x^t$. 
\end{remark}
 
Since $\beta$ is a multiplicative Lie algebra homomorphism, for $x,y \in K$, $\beta(t(x)*t(y))=\beta(t(x))*\beta(t(y))=x*y=\beta t(x*y)$
$\Rightarrow (t(x)*t(y))t(x*y)^{-1} \in \text{ker} \beta ~(= H) $
 
So we have a map $h^t~ :~ K \times K \rightarrow H$ such that
$ t(x)*t(y)=h^t(x,y)t(x*y)$ where $h^t$ satisfying 
\begin{equation} \label{11}
\bold{h^t(x,1)=h^t(1,x)=h^t(x,x)=1}
\end{equation}
Now consider the expression,\\
$(ht(x))*(kt(y))=^k{(ht(x)*t(y))}=^{kh}(t(x)*t(y))
=kh \sigma_{(x*y)}(h^{-1}k^{-1})h^t(x,y)t(x*y)$

Thus the multiplicative Lie algebra structure $*$ on $G$ is defined by 
\begin{center}
 $\mathbf{(ht(x))*(kt(y))=hk \sigma_{(x*y)}(h^{-1}k^{-1})h^t(x,y)t(x*y)} ... (17)$
\end{center}
Now we will see properties of the function $h^t$ by using equations (16) and (17):

Consider the expression
$(ht(x))*((kt(y))\cdot (lt(z)))=(ht(x)) * (k\sigma_y(l)f^t(y,z)t(yz))=hk \sigma_{y}(l)f^t(y,z)\sigma_{x*(yz)}(h^{-1}k^{-1} \sigma_y(l^{-1})f^t(y,z)^{-1})h^t(x,yz)t(x*yz) ... (18)$

On the other hand
$$(ht(x))*((kt(y))\cdot (lt(z)))=(ht(x)*kt(y))\cdot ^{kt(y)}{(ht(x)*lt(z)})
=(hk\sigma_{x*y}(h^{-1}k^{-1})$$ $$h^t(x,y) t(x*y))(kt(y)(hl\sigma_{(x*z)}(h^{-1}l^{-1})h^t(x,z))t(x*z)t(y)^{-1}k^{-1})=(hk\sigma_{x*y}(h^{-1}$$ $$ k^{-1})h^t(x,y)t(x*y))(k\sigma_y(hl \sigma_{(x*z)}(h^{-1}l^{-1})h^t(x,z))t(y)t(x*z)f^t(y^{-1},y)^{-1}t(y^{-1})k^{-1})
 $$ $$=(hk\sigma_{x*y}(h^{-1}k^{-1})h^t(x,y)t(x*y))\cdot (k\sigma_y(hl \sigma_{(x*z)}(h^{-1}l^{-1})h^t(x,z))f^t(y,x*z)t(y(x*z))$$ 
$$f^t(y^{-1},y)^{-1}\sigma_{y^{-1}}(k^{-1})t(y^{-1})=hk\sigma_{x*y}(h^{-1}k^{-1})h^t(x,y) t(x*y)(k\sigma_y(hl \sigma_{(x*z)}(h^{-1}$$ $$l^{-1})h^t(x,z))f^t(y,x*z)\sigma_{y(x*z)}(f^t(y^{-1},y)^{-1}\sigma_{y^{-1}} (k^{-1}))f^t(y(x*z),y^{-1})t(^y(x*z))$$ $$=hk\sigma_{x*y}(h^{-1}k^{-1})h^t(x,y)\sigma_{x*y}(k\sigma_y^t(hl\sigma_{x*z}(h^{-1}l^{-1})h^t(x,z))f^t(y,(x*z))\sigma_{y(x*z)}($$ $$f^t(y^{-1},y)^{-1})\sigma_{y^{-1}}(k^{-1}))f^t(y(x*z),y^{-1}))f^t(x*y,^y(x*z))t((x*y)^y(x*z))
= hkh^t(x,y)$$ $$ \sigma_{x*y}(h^{-1}\sigma_y(hl  \sigma_{(x*z)}(h^{-1}l^{-1})h^t(x,z))f^t(y,x*z)\sigma_{y(x*z)}(f^t(y^{-1},y)^{-1}\sigma_{y^{-1}}(k^{-1}))$$ $f^t(y(x*z),y^{-1}))f^t(x*y,^y(x*z))t(x*(yz)) ... (19)$

By equating equation (18) and (19), we have
\begin{equation}\label{12}
\begin{split}
\bold{\sigma_{x*y}(h^{-1}\sigma_y(hl \sigma_{(x*z)}(h^{-1}l^{-1})}\bold{h^t(x,z))f^t(y,x*z)\sigma_{y(x*z)} (f^t(y^{-1},y)^{-1}\sigma_{y^{-1}}(k^{-1}))} \\ \bold{f^t(y(x*z),y^{-1})f^t(x*y,^y(x*z))h^t(x,y)= \sigma_{y}(l)f^t(y,z)\sigma_{x*(yz)}(h^{-1}k^{-1}\sigma_y(l^{-1})} \\ \bold{f^t(y,z)^{-1})h^t(x,yz)}
\end{split}
\end{equation}
Now consider the expression\\
$((ht(x))\cdot(kt(y)))*(lt(z))=hl \sigma_{x}(k)f^t(x,y) \sigma_{xy*z}(h^{-1}l^{-1}\sigma_x(k^{-1})f^t(x,y)^{-1})h^t(xy,z)\\t((xy)*z) ... (20)$

On the other hand
$$((ht(x))\cdot(kt(y)))*(lt(z))=^{ht(x)}(kt(y)*lt(z))\cdot (ht(x)*lt(z))=ht(x)(kl\sigma_{(y*z)}(k^{-1}l^{-1})$$ $$h^t(y,z)t(y*z))t(x)^{-1}h^{-1}\cdot(hl\sigma_{(x*z}(h^{-1}l^{-1})h^t(x,z)t(x*z))
=h \sigma_x(kl\sigma_{(y*z)}(k^{-1}l^{-1})h^t(y,z))$$ $$t(x)t(y*z)t(x)^{-1}h^{-1} (hl\sigma_{(x*z}(h^{-1}l^{-1})h^t(x,z)t(x*z))=h \sigma_x(kl\sigma_{(y*z)}(k^{-1}l^{-1})h^t(y,z))$$ $$f^t(x,y*z)t(x(y*z))t(x)^{-1}h^{-1}(hl\sigma_{(x*z}(h^{-1}l^{-1})h^t(x,z)t(x*z))=h \sigma_x(kl\sigma_{(y*z)} (k^{-1}l^{-1})$$ $$h^t(y,z))f^t(x,y*z)t(x(y*z))f^t(x^{-1},x)^{-1}t(x^{-1})h^{-1}\cdot (hl\sigma_{(x*z}(h^{-1}l^{-1})h^t(x,z)t(x*z)$$
$$=h \sigma_x(kl\sigma_{(y*z)}(k^{-1}l^{-1})h^t(y,z))f^t(x,y*z)\sigma_{(x(y*z))}(f^t(x^{-1},x)^{-1})f^t(x(y*z),x^{-1})$$ $$t(^x(y*z))(l\sigma_{(x*z}(h^{-1}l^{-1})h^t(x,z)t(x*z))
=h \sigma_x(kl\sigma_{(y*z)}(k^{-1}l^{-1})h^t(y,z))f^t(x,y*z)$$ $\sigma_{(x(y*z))} (f^t(x^{-1},x)^{-1})f^t(x(y*z),x^{-1})\sigma_{(^x(y*z))}(l\sigma_{(x*z}(h^{-1}l^{-1})h^t(x,z))t(^x(y*z))t(x*z))$

Therefore

$((ht(x))\cdot(kt(y)))*(lt(z))
= h \sigma_x^t(kl\sigma_{(y*z)}(k^{-1}l^{-1})h^t(y,z))f^t(x,y*z)\sigma_{(x(y*z))} (f^t(x^{-1},x)^{-1})\\ f^t(x(y*z),x^{-1})\sigma_{^x(y*z)} (l\sigma_{(x*z}(h^{-1}l^{-1})h^t(x,z))f^t(^x(y*z),(x*z))t(^x(y*z)(x*z))) ... (21)$

Further, equating the equations (20) and (21), we have
\begin{equation}\label{13}
\begin{split}
\bold{lf^t(x,y) \sigma_{(xy)*z}(h^{-1}l^{-1}\sigma_x(k^{-1})f^t(x,y)^{-1})h^t(xy,z)=
\sigma_x(l\sigma_{(y*z)}(k^{-1}l^{-1})h^t(y,z))} \\ \bold{f^t(x,y*z)\sigma_{x(y*z)}(f^t(x^{-1},x)^{-1})f^t(x(y*z),x^{-1})\sigma_{^x(y*z)}(l\sigma_{(x*z}(h^{-1}l^{-1})h^t(x,z))}\\ \bold{f^t(^x(y*z),(x*z))}
\end{split}
\end{equation}
Now consider the expression

 $$^{lt(z)}(ht(x)*kt(y))=lt(z)(hk\sigma_{(x*y)}(h^{-1}k^{-1})h^t(x,y)t(x*y))t(z)^{-1}l^{-1} =l\sigma_{z}(hk $$ $$\sigma_{(x*y)}(h^{-1}k^{-1})h^t(x,y))f^t(z,x*y)t(z(x*y))f^t(z^{-1},z)^{-1}t(z^{-1})l^{-1}
 =l\sigma_{z}(hk \sigma_{(x*y)}(h^{-1}$$ $k^{-1})h^t(x,y))f^t(z,x*y)\sigma_{(z(x*y)}(f^t(z^{-1},z)^{-1} \sigma_{z^{-1}}(l^{-1}))f^t(z(x*y),z^{-1})t(^z(x*y)) ... (22)$

On the other hand 
 $$^{lt(z)}(ht(x))*^{lt(z)}(kt(y))=(lt(z)ht(x)t(z)^{-1}l^{-1})*(lt(z)kt(y)t(z)^{-1}l^{-1})
=(l\sigma_z(h)$$ $$f^t(z,x)\sigma_{zx}(f^t(z^{-1},z)^{-1})t(zx)t(z^{-1})l^{-1})*(l\sigma_z (k)f^t(z,y)\sigma_{zy}(f^t(z^{-1},z)^{-1})t(zy)$$ $$t(z^{-1})l^{-1})=l^2\sigma_z(hk)f^t(z,x)f^t(z,y)\sigma_{zx} (f^t(z^{-1},z)^{-1})\sigma_{zy}(f^t(z^{-1},z)^{-1})\sigma_{^zx}(l^{-1})$$ $$\sigma_{^zy} (l^{-1})f^t(zx,z^{-1})f^t(zy,z^{-1})\sigma_{(^zx*^zy)}(l^{-2}\sigma_z(h^{-1}k^{-1})f^t(z,x)^{-1}f^t(z,y)^{-1}\sigma_{zx} ($$ $f^t(z^{-1},z))\sigma_{zy}(f^t(z^{-1},z))f^t(zy,z^{-1})^{-1}f^t(zx,z^{-1})^{-1}\sigma_{^zy}(l)\sigma_{^zx}(l))h^t(^zx,^zy)t(^zx*^zy) \\ ... (23)$

By equating the equation $(22)$ and $(23)$, we have
\begin{equation}\label{14}
\begin{split}
\bold{\sigma_{z}(\sigma_{(x*y)}(h^{-1}k^{-1})h^t(x,y))f^t(z,x*y)\sigma_{(z(x*y)}(f^t(z^{-1},z)^{-1}\sigma_{z^{-1}}(l^{-1}))}\\ \bold{f^t(z(x*y),z^{-1})=lf^t(z,x)f^t(z,y)\sigma_{zx}(f^t(z^{-1},z)^{-1})\sigma_{zy}(f^t(z^{-1},z)^{-1})\sigma_{^zx}(l^{-1})}\\   \bold{ \sigma_{^zy}(l^{-1})f^t(zy,z^{-1})f^t(zx,z^{-1}) \sigma_{(^zx*^zy)}(l^{-2}\sigma_z(h^{-1}k^{-1})f^t(z,x)^{-1}f^t(z,y)^{-1}} \\ \bold{\sigma_{zx} (f^t(z^{-1},z))\sigma_{zy}(f^t(z^{-1},z))f^t(zy,z^{-1})^{-1}f^t(zx,z^{-1})^{-1}\sigma_{^zy}(l)\sigma_{^zx}(l))h^t(^zx,^zy)}
\end{split}
\end{equation}  
  
By Jacobi identity, we have
\begin{equation} \label{15}
\begin{split}
\ \bold{hk^2\sigma_{x*y}(h^{-1}k^{-1})h^t(x,y)\sigma_y(l)f^t(y,z)\sigma_{yz} (f^t(y^{-1},y)^{-1}) f^t(yz,y^{-1})\sigma_{^yz}(k^{-1})} \\ \bold{\sigma_{(x*y)*^yz} (h^{-1}k^{-2}\sigma_{x*y} (hk)h^t(x,y)^{-1}\sigma_{y}(l^{-1})f^t(y,z)^{-1}\sigma_{yz}(f^t(y^{-1},y))f^t(yz,y^{-1})^ {-1}}\\ \bold{\sigma_{^yz}(k))h^t(x*y,^yz)\sigma_{(x*y)*^yz}(kl^2\sigma_{y*z}(k^{-1}l^{-1})h^t(y,z)
 \sigma_{z}(h)f^t(z,x)\sigma_{zx}(f^t(z^{-1},z))^{-1}}\\ \bold{f^t(zx,z^{-1})\sigma_{^zx}(l^{-1}) \sigma_{(y*z)*^zx}(k^{-1}l^{-2} \sigma_{y*z}(kl)h^t(y,z)^{-1}\sigma_{z}(h^{-1})f^t(z,x)^{-1}} \\ \bold{\sigma_{zx}(f^t(z^{-1},z))f^t(zx,z^{-1})\sigma_{^zx} (l))h^t(y*z,^zx))f^t((x*y)*^yz,(y*z)*^zx)} \\ \bold{\sigma_{((x*y)*^yz)*((y*z)*^zx)}(lh^2\sigma_{(z*x)}(l^{-1}h^{-1})h^t(z,x)\sigma_x(k)f^t(x,y)\sigma_{xy}(f^t(x^{-1},x)^{-1})} \\ \bold{f^t(xy,x^{-1})\sigma_{^xy}(h^{-1})\sigma_{((z*x)*^xy)}(l^{-1}h^{-2}\sigma_{z*x}(lh) h^t(z,x)^{-1}\sigma_x(k^{-1})f^t(x,y)^{-1}} \\ \bold{\sigma_{xy}(f^t(x^{-1},x))
  f^t(xy,x^{-1})^{-1}\sigma_{^xy}(h))h^t(z*x,^xy))} \\ \bold{f^t(((x*y)*^yz)*((y*z)*^zx),(z*x)*^xy)=1}
  \end{split}
\end{equation}
 \begin{definition} \label{Lie center factor system}
Let $H$ be an abelian group with trivial multiplicative Lie algebra structure and $K$ be an arbitrary multiplicative Lie algebra. Then a \textbf{Lie center factor system} is a quintuple $(K,H,f,h,\sigma)$, where $\sigma : K \longrightarrow Aut(H)$ is a group automorphism and $f, \ h$ are maps from $K \times K$ to $H$ satisfying the conditions like equations $(\ref{10}), (\ref{11}), (\ref{12}), (\ref{12}), (\ref{14})$ and $(\ref{15})$.
\end{definition}
\begin{proposition} \label{Lie center factor and extension}
Let $E(H, K)$ be Lie center extension of $H$ by $K$ with a choice of section $t$. Then their exists a Lie center factor system $\text{LFac}(E,t)=(K,H,f^t,h^t,\sigma)$. Conversely, for every Lie center factor system $(K,H,f,h,\sigma)$ we have a Lie center extension $E(H, K)$ of $H$ by $K$, with a section $t$ such that $\text{LFac}(E,t)=(K,H,f,h,\sigma)$.
\end{proposition}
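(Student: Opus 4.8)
The forward direction I would treat as essentially already established by the computations preceding the statement. Given the Lie center extension $E(H,K)$ and the section $t$, the displayed formulas (16) and (17) define the maps $f^t,h^t\colon K\times K\to H$, while the underlying group extension furnishes the homomorphism $\sigma\colon K\to\mathrm{Aut}(H)$, $\sigma(x)=\sigma_x$, which is well defined and independent of the section by the Remark following the definition of a Lie center extension. The normalization (\ref{10}) holds because $t$ may be chosen with $t(1)=1$ and because $\cdot$ is associative, while (\ref{11}) holds since $t(x)*t(x)=t(x)*1=1*t(x)=1$; the relations (\ref{12}), (\ref{13}), (\ref{14}) and (\ref{15}) were derived precisely by evaluating the defining axioms of a multiplicative Lie algebra on elements of the form $ht(x)$. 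Hence $(K,H,f^t,h^t,\sigma)$ meets every condition of Definition \ref{Lie center factor system}, so $\mathrm{LFac}(E,t)=(K,H,f^t,h^t,\sigma)$ is a Lie center factor system.

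For the converse I would mimic the construction used in Proposition \ref{center}. Starting from a Lie center factor system $(K,H,f,h,\sigma)$, set $G=H\times K$ and equip it, guided by (16) and (17), with the operations
\begin{align*}
(a,x)\cdot(b,y) &= \bigl(a\,\sigma_x(b)\,f(x,y),\,xy\bigr),\\
(a,x)*(b,y) &= \bigl(ab\,\sigma_{x*y}(a^{-1}b^{-1})\,h(x,y),\,x*y\bigr).
\end{align*}
The plan is then to verify that $(G,\cdot,*)$ is a multiplicative Lie algebra, that $i\colon H\to G$, $i(a)=(a,1)$, and $p\colon G\to K$, $p(a,x)=x$, assemble into a short exact sequence which is a Lie center extension, and finally that the section $t(x)=(1,x)$ recovers the original data.

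The bulk of the work is the axiom check, and here each factor-system condition has a designated role. The cocycle identity in (\ref{10}) is exactly what makes $\cdot$ associative, so $(G,\cdot)$ is a group with identity $(1,1)$ in which $H$ sits as a normal subgroup acted on by $K$ through $\sigma$; the normalization (\ref{11}) guarantees that $t(x)=(1,x)$ is a genuine section and that $x*x=1$ holds. The two distributivity axioms of $*$ over conjugation translate into (\ref{12}) and (\ref{13}), the compatibility ${}^{g}(u*v)={}^{g}u*{}^{g}v$ reduces to (\ref{14}), and the Jacobi identity reduces to (\ref{15}). Once these hold, $H\subseteq LZ(G)$ follows at once: for any $(b,y)$ one computes $(a,1)*(b,y)=\bigl(ab\,\sigma_{1*y}(a^{-1}b^{-1})\,h(1,y),\,1*y\bigr)=(1,1)$, using $1*y=1$, $\sigma_1=\mathrm{id}$, and $h(1,y)=1$ from (\ref{11}). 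Substituting $t(x)=(1,x)$ into the two operations then yields $f^t=f$, $h^t=h$ and $\sigma^t=\sigma$ by inspection.

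The main obstacle is purely computational: transcribing the lengthy identities (\ref{12})--(\ref{15}) into the coordinate form on $G=H\times K$ and confirming that they are term-for-term equivalent to the multiplicative Lie algebra axioms. Matching the Jacobi identity against (\ref{15}) is the most delicate point, since both sides carry nested $\sigma$-twists together with several $f$ and $h$ factors; comparable care is needed for (\ref{14}), where conjugation in $G$ mixes the action $\sigma$ with the cocycle $f$. No idea beyond the standard group-extension bookkeeping is required, but the matching must be carried out carefully so that every clause of Definition \ref{Lie center factor system} is used in full and in its correct place.
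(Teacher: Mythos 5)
Your proposal matches the paper's proof in essentially every respect: the forward direction is read off from the computations preceding the statement that derive $f^t$, $h^t$, $\sigma$ and the identities (\ref{10})--(\ref{15}), and the converse uses exactly the same construction on $G=H\times K$ with the operations $(a,x)\cdot(b,y)=(a\sigma_x(b)f(x,y),xy)$ and $(a,x)*(b,y)=(ab\,\sigma_{x*y}(a^{-1}b^{-1})h(x,y),x*y)$, together with the section $t(x)=(1,x)$. The only difference is one of explicitness: you outline the axiom-by-axiom bookkeeping and the verification that $H\subseteq LZ(G)$, which the paper compresses into ``it is easy to see'' and ``by an easy computation.''
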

\begin{proof}
From the above discussions and the Definition \ref{Lie center factor system}, it can be seen that every Lie center extension $E(H, K)$ of $H$ by $K$ with a given section $t$ determines a Lie center factor system $LFac(E, t) \equiv (K,H,f^t,h^t,\sigma).$ (This is called as a Lie center factor system given by the center extension E.)

Conversely let $(K,H,f,h, \sigma)$ be a Lie center factor system. Take $G = H \times K$ and define the $\cdot$ and $*$ binary operations on $G$ as
\begin{center}
$(a,x)\cdot (b,y)=(a\sigma_x (b)f(x,y),xy)$
and $(a,x)*(b,y)=(ab\sigma_{x*y}(a^{-1}b^{-1})h(x,y),x*y)$
\end{center}
It is easy to see that $(G,\cdot,*)$ is a multiplicative Lie Algebra such that
\begin{center}
\begin{tikzcd}
   1\arrow{r} & H\arrow{r}{i} & G \arrow{r}{p} & K\arrow{r} & 1
\end{tikzcd}
\end{center}
is a Lie center extension of $H$ by $K$ , where $i$ is the first inclusion and $p$ is the second projection. Let t be a section of $E(H, K)$ given by,
\begin{center}
$t(x)=(1,x)$
\end{center}
By an easy computation, it can be seen that $\sigma_E = \sigma, f^t=f~ \text{and}~ h^t=h.$
\end{proof}
  \subsection*{Equivalence between Category LEXT of Lie center extensions and Category LFAC of Lie center factor systems}
Let $(\lambda, \mu, \nu)$ be a morphism between the Lie center extensions $E_1(H, K)$ and $E_2(H, K)$ of $H$ by $K$ given by the following commutative diagram:
\begin{center}
\begin{tikzcd}
  E_1(H,K) \equiv  1\arrow{r} & H_1 \arrow{r}{i_1}\arrow{d}{\lambda} & G_1 \arrow{r}{\beta_1}\arrow{d}{\mu} & K_1\arrow{r}\arrow{d}{\nu} & 1 \\
   E_2(H,K) \equiv  1\arrow{r} & H_2\arrow{r}{i_2} & G_2\arrow{r}{\beta_2} & K_2\arrow{r} & 1
\end{tikzcd}
\end{center}
Let $t_1$ and $t_2$ be sections of $E(H_1, K_1)$ and $E(H_2, K_2)$ respectively. Consider the corresponding factor systems $(K_1,H_1,f^{t_1},h^{t_1}, \sigma^{1})$ and $(K_2,H_2,f^{t_2},h^{t_2}, \sigma^{2})$ of $E(H_1, K_1)$ and $E(H_2, K_2)$ respectively. Let $x \in K_1$. Then $\mu(t_1(x)) \in G_2$ and $\beta_2(\mu(t_1(x)))$ = $\nu(\beta_1(t_1(x)))$ = $\nu(x)$ = $\beta_2(t_2(\nu(x)))$. Thus, $\mu(t_1(x))(t_2(\nu(x)))^{-1}$ $ \in ker \beta_2 (= H_2)$. In turn, we have a unique $g(x)$ $\in H_2$ such that 
\begin{center}
$\mu(t_1(x))$ = $g(x)t_2(\nu(x)) ... (24)$ 
\end{center}
Since, $t_1(1)=1$ = $t_2(1)$,  it follows that
\begin{equation} \label{16}
\bold{g(1)=1}
\end{equation} 
Also we get the following equations  (see pages 375-376 (\cite{RL}))
\begin{equation} \label{17}
\bold{\lambda(f^{t_1}(x,y))g(xy)= g(x)\sigma_{\nu(x)}^{2}(g(y))f^{t_2}(\nu(x),\nu(y))}
\end{equation}
and
\begin{equation} \label{18}
\bold{g(x)\sigma_{\nu(x)}^{2}(\lambda(h))g(x)^{-1}=\lambda(\sigma_x^{1}(h))}
\end{equation}
Further,

 $\mu(t_1(x)*t_1(y))=\mu(h^{t_1}(x,y)t_1(x*y))
 = \lambda(h^{t_1}(x,y)) \mu(t_1(x*y))=\lambda(h^{t_1}(x,y))g(x*y)t_2(\nu(x)*\nu(y)) ... (25)$
 
On the other hand,
$\mu(t_1(x)*t_1(y))=\mu(t_1(x))*\mu(t_2(y))=(g(x)t_2(\nu(x)))*(g(y)t_2(\nu(y))$
$=g(x)g(y)\sigma_{(\nu(x)*\nu(y))}^2(g(x)^{-1}g(y)^{-1})h^{t_2}(\nu(x),\nu(y))t_2(\nu(x)*\nu(y)) ... (26)$

Now comparing equation $(25)$ and $(26)$, we have
\begin{equation} \label{19}
\bold{\lambda(h^{t_1}(x,y))g(x*y)=g(x)g(y)\sigma_{(\nu(x)*\nu(y))}^2(g(x)^{-1}g(y)^{-1})h^{t_2}(\nu(x),\nu(y))}
\end{equation}
Thus a morphism $(\lambda,\mu,\nu)$ between Lie center extensions $E_1(H,K)$ and $E_2(H,K)$ together with choices of sections $t_1$ and $t_2$ of the corresponding center extensions, induces a map $g$ from $K_1$ to $H_2$  such that the triple $(\nu, g, \sigma)$ satisfies the equations $(\ref{16}),(\ref{17}),(\ref{18})$ and $(\ref{19})$. This can be regarded as a morphism between the corresponding Lie center factor systems.

Now we introduce the category \textbf{LFAC} of Lie center factor systems whose objects are Lie center factor system, and a morphism from $(K_1,H_1,f^{1},h^{1}, \sigma^1)$ to $(K_2,H_2,f^{2},h^{2}, \sigma^2)$ is a triple $(\nu, g, \lambda)$, where $\nu : K_1 \longrightarrow K_2$, $\lambda: H_1 \longrightarrow H_2$ are multiplicative Lie algebra homomorphism, and $g: K_1 \longrightarrow H_2$ is a map such that 
\begin{enumerate}
\item $g(1) = 1$
\item $\lambda(f^{1}(x,y))g(xy)$ = $ g(x)\sigma_{\nu(x)}^{2}(g(y))f^{2}(\nu(x),\nu(y))$
\item $\lambda(\sigma_x^{1}(h))=g(x)\sigma_{\nu(x)}^{2}(\lambda(h))g(x)^{-1}$
\item $\lambda(h^{1}(x,y))g(x*y)=g(x)g(y)\sigma_{(\nu(x)*\nu(y))}^2(g(x)^{-1}g(y)^{-1})h^{2}(\nu(x),\nu(y))$
\end{enumerate}
The composition of morphisms $(\nu_1, g_1, \lambda_1)$ from $(K_1,H_1,f^{1},h^{1}, \sigma^1)$ to $(K_2,H_2,f^{2},h^{2}, \\\sigma^2)$ and $(\nu_2, g_2, \lambda_2)$ from $(K_2,H_2,f^{2},h^{2}, \sigma^2)$ to $(K_3,H_3,f^{3},h^{3}, \sigma^3)$ is the triple $(\nu_2 ~ o~ \nu_1,\\ g_3, \lambda_2 ~ o~ \lambda_1)$, where $g_3$ is given by $g_3 (x) = g_2(\nu_1 (x))\lambda_2(g_1(x))$ for each $x \in K_1$.

So, finally from above discussion we have the following theorem:
\begin{theorem} \label{equivalence between Lie}
There is an equivalence between the category $\mathbf{LEXT}$ of Lie center extensions to the category $\mathbf{LFAC}$ of Lie center factor systems.
\end{theorem}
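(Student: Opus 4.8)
The plan is to establish the equivalence exactly in the spirit of Theorem~\ref{equivalence}, namely by exhibiting a functor $F\colon \mathbf{LEXT}\to\mathbf{LFAC}$ and proving that it is fully faithful and essentially surjective, from which the standard criterion delivers an equivalence of categories. To define $F$ on objects I would fix, once and for all, a section $t_E$ for each Lie center extension $E(H,K)$ and set $F(E)=\mathrm{LFac}(E,t_E)=(K,H,f^{t_E},h^{t_E},\sigma)$, which is a genuine Lie center factor system by Proposition~\ref{Lie center factor and extension}. On a morphism $(\lambda,\mu,\nu)$ of extensions I attach the triple $(\nu,g,\lambda)$, where $g$ is the unique map determined by $\mu(t_1(x))=g(x)t_2(\nu(x))$ of equation $(24)$; the computations already carried out in the excerpt producing $(\ref{16})$, $(\ref{17})$, $(\ref{18})$ and $(\ref{19})$ show precisely that $(\nu,g,\lambda)$ satisfies the four defining conditions of a morphism in $\mathbf{LFAC}$. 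Functoriality is then immediate: the identity morphism induces $g\equiv 1$, and the composition computation recorded above gives $g_3(x)=g_2(\nu_1(x))\lambda_2(g_1(x))$, which is by definition the composition of the induced morphisms in $\mathbf{LFAC}$.

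Essential surjectivity is supplied directly by the converse half of Proposition~\ref{Lie center factor and extension}: every Lie center factor system $(K,H,f,h,\sigma)$ is realised as $\mathrm{LFac}(E,t)$ for the extension built on $G=H\times K$ with the stated twisted operations and section $t(x)=(1,x)$. Hence $F(E)$ recovers the given factor system, and $F$ hits every object up to isomorphism (indeed on the nose).

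For full faithfulness, fix extensions $E_1,E_2$ with their chosen sections $t_1,t_2$. Injectivity on hom-sets holds because any morphism $\mu$ is completely pinned down by the triple $(\lambda,\nu,g)$: since every element of $G_1$ is uniquely of the form $h\,t_1(x)$, commutativity of the two squares forces $\mu(h\,t_1(x))=\lambda(h)\,g(x)\,t_2(\nu(x))$. For surjectivity I would begin with an arbitrary $\mathbf{LFAC}$-morphism $(\nu,g,\lambda)$ satisfying conditions (1)--(4), \emph{define} $\mu$ by this very formula $\mu(h\,t_1(x))=\lambda(h)\,g(x)\,t_2(\nu(x))$, and then verify that $\mu$ is a homomorphism of multiplicative Lie algebras rendering both squares commutative; its induced triple is then tautologically $(\nu,g,\lambda)$.

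The main obstacle is exactly this last verification, that the prescribed $\mu$ respects both the group product $``\cdot"$ and the Lie product $``*"$. Compatibility with $``\cdot"$ is essentially the reverse of the derivation of condition (2), using the $2$-cocycle identity $(\ref{10})$ for $f$ together with the $\sigma$-compatibility condition (3) that relates $\lambda$, $\sigma^{1}$ and $\sigma^{2}$. Compatibility with $``*"$ is the reverse of the derivation of $(\ref{19})$, i.e.\ condition (4), and is the heaviest calculation: one must unwind the structure formula for $``*"$ on both $G_1$ and $G_2$, apply $\lambda$ and the defining formula for $\mu$, and cancel all the $\sigma$-twisted correction terms by repeated use of (3) and (4). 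Once both compatibilities are confirmed, $F$ is fully faithful and essentially surjective, hence an equivalence; its quasi-inverse is the object assignment $(K,H,f,h,\sigma)\mapsto E$ of Proposition~\ref{Lie center factor and extension} together with the morphism assignment $(\nu,g,\lambda)\mapsto(\lambda,\mu,\nu)$ just constructed.
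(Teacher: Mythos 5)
Your proposal is correct and follows essentially the same route as the paper: the paper derives the theorem directly from the preceding discussion, namely the assignment $E\mapsto \mathrm{LFac}(E,t)$ of Proposition~\ref{Lie center factor and extension}, the assignment $(\lambda,\mu,\nu)\mapsto(\nu,g,\lambda)$ with $g$ determined by $\mu(t_1(x))=g(x)t_2(\nu(x))$ together with equations (\ref{16})--(\ref{19}), and the composition law $g_3(x)=g_2(\nu_1(x))\lambda_2(g_1(x))$, which is exactly your functor $F$. You in fact go slightly beyond the paper by organizing this into the fully-faithful-plus-essentially-surjective criterion and by indicating the fullness verification (defining $\mu(h\,t_1(x))=\lambda(h)g(x)t_2(\nu(x))$ and checking compatibility with $\cdot$ and $*$ via conditions (2)--(4), using that $H_2$ is abelian and $\nu(x*y)=\nu(x)*\nu(y)$), a check the paper leaves implicit but which goes through exactly as you describe.
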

\begin{theorem} \label{surjective map 1}
Let $LExt(H, K)$ denote the set of all equivalence classes of Lie center extensions of $H$ by $K$ and $Hom(K, Aut(H))$ denote the set of all group homomorphism from $K$ to $Aut(H)$. Then there is a natural surjective map $\eta : LExt(H, K) \rightarrow Hom(K, Aut(H))$ defined by $\eta([E]) = \sigma_{[E]}$.
\end{theorem}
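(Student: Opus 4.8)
The plan is to follow the pattern of Theorem \ref{surjective map}, with $End(H)$ replaced by $Aut(H)$ and $\Gamma$ replaced by the conjugation datum $\sigma$. First I would check that $\eta$ is well defined. For a Lie center extension $E(H,K)$ with section $t$, the assignment $x\mapsto\sigma_x^t$, $\sigma_x^t(a)=t(x)at(x)^{-1}$, is a group homomorphism $K\to Aut(H)$: it is the conjugation action of the quotient on the abelian kernel, and it is a genuine homomorphism (not merely up to inner automorphisms) precisely because $H$ is abelian, so that $Aut(H)=Out(H)$. By the remark following equation (\ref{10}), $\sigma_E^t$ is independent of both the section $t$ and the chosen representative of the equivalence class; hence $\sigma_{[E]}:=\sigma_E^t$ depends only on $[E]$, and $\eta([E])=\sigma_{[E]}$ is a well-defined map into $Hom(K,Aut(H))$.

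Next, for surjectivity I would start from an arbitrary group homomorphism $\sigma:K\to Aut(H)$ and build a Lie center extension realizing it, via the converse half of \propref{Lie center factor and extension}. Take $f\equiv 1$ and $h\equiv 1$, form the quintuple $(K,H,f,h,\sigma)$, put $G=H\times K$ with
\begin{center}
$(a,x)\cdot(b,y)=(a\sigma_x(b),xy)$ \quad and \quad $(a,x)*(b,y)=(ab\,\sigma_{x*y}(a^{-1}b^{-1}),x*y),$
\end{center}
and let $i$ be the first inclusion and $p$ the second projection. Then $H\subseteq LZ(G)$, since for any $(b,y)$ we get $(a,1)*(b,y)=(ab\,\sigma_{1*y}(a^{-1}b^{-1}),1*y)=(1,1)$, using $1*y=1$ in $K$, $\sigma_1=\mathrm{id}_H$, and $H$ abelian. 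With the section $t(x)=(1,x)$ one computes $\sigma_x^t(a)=t(x)(a,1)t(x)^{-1}=(\sigma_x(a),1)$, so $\sigma_E=\sigma$, while $f^t\equiv 1$ and $h^t\equiv 1$. Thus $\eta([E])=\sigma$, which proves surjectivity.

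The one real obstacle is to confirm that $(G,\cdot,*)$ is a multiplicative Lie algebra, that is, that the constant maps $f\equiv h\equiv 1$ together with $\sigma$ really do form a Lie center factor system in the sense of Definition \ref{Lie center factor system}; once this is granted, \propref{Lie center factor and extension} delivers the extension with no further work. Conditions (\ref{10}) and (\ref{11}) are immediate from $\sigma_1=\mathrm{id}_H$ and $1*x=x*1=x*x=1$ in $K$. The heavier identities (\ref{12})--(\ref{15}) appear daunting, but setting every $f^t$ and $h^t$ equal to $1$ annihilates all factor-set terms; what remains telescopes because $\sigma$ is a homomorphism ($\sigma_w\circ\sigma_{w'}=\sigma_{ww'}$ and $\sigma_{{}^{z}w}=\sigma_z\sigma_w\sigma_z^{-1}$), and the surviving $\sigma$-indices match on the two sides exactly by the multiplicative Lie algebra identities of $K$ — antisymmetry $x*x=1$, bilinearity of $*$ over $\cdot$, the compatibility ${}^{z}(x*y)={}^{z}x*{}^{z}y$, and the Jacobi identity behind (\ref{15}). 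Because $H$ is abelian with trivial Lie structure, all brackets internal to $H$ vanish and both sides collapse to the same element of $H$. I expect the sole labor to be this index bookkeeping, heaviest in (\ref{15}); it is forced rather than delicate, since (\ref{12})--(\ref{15}) were themselves \emph{derived} from an actual Lie center extension and the semidirect-type $G$ above is precisely the universal solution with trivial cocycles.
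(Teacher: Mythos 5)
Your proposal is correct and follows essentially the same route as the paper: well-definedness via the remark that $\sigma_E^t$ is independent of the section and of the representative of $[E]$, and surjectivity by realizing a given $\sigma$ through the semidirect-type multiplicative Lie algebra $G=H\times K$ with $(h,x)\cdot(k,y)=(h\sigma_x(k),xy)$, $(h,x)*(k,y)=(hk\sigma_{x*y}(h^{-1}k^{-1}),x*y)$ and section $t(x)=(1,x)$, giving trivial $f^t,h^t$ and $\sigma_E=\sigma$. The only difference is that you explicitly flag and sketch the verification that the trivial-cocycle quintuple $(K,H,1,1,\sigma)$ satisfies conditions (\ref{10})--(\ref{15}), which the paper simply asserts with ``it is easy to see''; this is a sound filling-in, not a divergence.
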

\begin{proof}
From the Remark 3.1, the map $\eta :LExt(H, K) \longrightarrow Hom(K, Aut(H))$ defined by $\eta([E]) = \sigma_{[E]}$ is well defined. 
 
Let $\sigma : K\longrightarrow Aut(H)$ is a group homomorphism defined by $\sigma(x) = \sigma_x$. Let $G = H \times K$. Then $G$ is a multiplicative Lie algebra with the operations $\cdot $ and $*$, defined by $(h, x)\cdot (k, y) = (h\sigma_x(k),xy)$ and $(h, x)*(k, y) = (hk\sigma_{x*y}(h^{-1}k^{-1}), x*y)$ respectively. Consider a section $t : K \longrightarrow G$ is defined by $t(x) = (1, x)$. Therefore we have a Lie center extension $E(H, K)$ of multiplicative Lie algebra of $H$ by $K$
\begin{center}
\begin{tikzcd}
   1\arrow{r} & H\arrow{r}{\alpha} & G \arrow{r}{\beta} & K\arrow{r} & 1
\end{tikzcd}
\end{center}
such that $f^t$ and $h^t$ are trivial maps and $\sigma_E = \sigma$.
Thus every group homomorphism from $K\longrightarrow Aut(H)$ gives a Lie center extension of multiplicative Lie algebra of $H$ by $K$. This shows that $\eta$ is surjective.
\end{proof}
\begin{example}
The map $\eta$ in Theorem $\ref{surjective map 1}$ need not be injective. Specifically, there may exists same group homomorphism  $\sigma$ from $K$ to $Aut(H)$ for two distinct classes of extensions. Consider the center extensions:
\begin{center}
\begin{tikzcd}
E_1(\mathbb{Z},\mathbb{Z}_5) \equiv   \lbrace 0\rbrace \arrow{r} & \mathbb{Z}\arrow{r}{(i_1,o)} & \mathbb{Z}\bigoplus \mathbb{Z}_5 \arrow{r}{p_2} & \mathbb{Z}_5\arrow{r} &\lbrace 0\rbrace
\end{tikzcd}
\end{center}
and
\begin{center}
\begin{tikzcd}
E_2(\mathbb{Z},\mathbb{Z}_5) \equiv   \lbrace 0\rbrace\arrow{r} & \mathbb{Z}\arrow{r}{\alpha} & \mathbb{Z} \arrow{r}{\nu} & \mathbb{Z}_5\arrow{r} & \lbrace 0\rbrace
\end{tikzcd}
\end{center}
of the multiplicative Lie algebra $\mathbb{Z}$ by $\mathbb{Z}_5$, where $\alpha(n)=5n, ~ \forall ~ n \in \mathbb{Z}$ and $\nu$ is natural quotient map. Since $\mathbb{Z}$ and $\mathbb{Z}_5$ are cyclic groups, the multiplicative Lie algebra structure on these groups is trivial. Also it can be seen that $E_1(\mathbb{Z},\mathbb{Z}_5)$ and $E_2(\mathbb{Z},\mathbb{Z}_5)$ are not equivalent as center extension. Since $Aut(\mathbb{Z})$ is isomorphic to $\mathbb{Z}_2$, $Hom(\mathbb{Z}_5, Aut(\mathbb{Z}))$ is trivial. Therefore here it can be seen that $[E_1]\neq[E_2]$ but $\eta([E_1])=\eta([E_2])$.
\end{example}
Now we discuss the following problem:

\noindent\textbf{Problem:} Let $H$ be an abelian group with trivial multiplicative Lie algebra structure. Classify all Lie center extensions of $H$ by $K$ (up to equivalence) with the given group homomorphism $\sigma : K \longrightarrow Aut(H)$
\begin{definition} 
A multiplicative Lie center 2-co-cycle of a multiplicative Lie algebra K with coefficient in an abelian group H with trivial multiplicative Lie algebra structure is a triplet $(f,h,\sigma)$, where $f \in Z^2(K,H)$ is a 2-group co-cycle of K with coefficient in the trivial $K$-module $H$, $h:K \times K \longrightarrow H$ and $ \sigma : K \longrightarrow Aut(H)$  are the maps satisfy condition like equations $(\ref{10}),(\ref{11}),(\ref{12}),(\ref{13})$,$(\ref{14})$, and $(\ref{15})$. 
\end{definition} 
\begin{remark}
Let $E(H, K)$ be a center extension with a choice of section $t$. Then from above discussion, it is clear that we have a multiplicative Lie center 2-cocycle $(f^t,h^t,\sigma_E)$.

Conversely, if we have a multiplicative Lie center 2-cocycle  $(f,h, \sigma)$ of multiplication Lie Algebra $K$ with coefficient in the abelian group $H$ with trivial multiplicative Lie algebra structure, then from Proposition $(\ref{Lie center factor and extension})$ it can be seen that there exists a multiplicative Lie algebra $G$ and a center extension $E(H, K)$ with a choice of section $t$ such that $f^t=f$, $\sigma_{E}=\sigma$ and $h^t=h$.
\end{remark}
Let $E(H, K)$ be a Lie center extension with a choice of section $t$. Then we have a  multiplicative Lie center 2-cocycle $(f^t,h^t,\sigma_E)$.
Now, let s be an another section of $E$. Then their exists a map $g: K\rightarrow H$ with $g(e)=1$ such that $s(x)=g(x)t(x)$ for all $ x \in K$. Then

$$(hs(x))\cdot (ks(y))=h\sigma_x(k)f^s(x,y)s(xy)= h\sigma_x(k)f^s(x,y)g(xy)t(xy) ... (27)$$
On the other hand,

$(hs(x))\cdot (ks(y)) = h\sigma_x(k)s(x)s(y)=h\sigma_x(k)g(x)t(x)g(y)t(y)=h\sigma_x(k)g(x)\sigma_x(g(y))\\ f^t(x,y)t(xy) ... (28)$

Therefore on comparing equation $(27)$ and $(28)$ we have 
\begin{equation}
\bold{f^s(x,y)=g(x)g(xy)^{-1}\sigma_x(g(y))f^t(x,y)}
\end{equation}
Further

$(hs(x))*(ks(y))=hk\sigma_{x*y}(h^{-1}k^{-1})h^s(x,y)s(x*y)=hk\sigma_{x*y}(h^{-1}k^{-1})h^s(x,y)\\g(x*y)t(x*y) ... (29)$ 

Also we have
$$(hs(x))*(ks(y)) = hkg(x)g(y)\sigma_{x*y}(h^{-1}k^{-1}g(x)^{-1}g(y)^{-1})h^t(x,y)t(x*y) ... (30)$$

Thus on comparing equations $(29)$ and $(30)$ we have
\begin{equation}
\bold{h^s(x,y)=g(x)g(x*y)^{-1}g(y)\sigma_{x*y}(g(x)^{-1}g(y)^{-1})h^t(x,y)}
\end{equation}
These equations prompts us to the following definition:
\begin{definition}
Two multiplicative Lie center 2-cocycles $(f^s,h^s,\sigma)$ and $(f^t,h^t,\sigma)$ are said to be equivalent if there is an identity preserving map $g:K \longrightarrow H$ satisfying:
\begin{enumerate}
\item $f^s(x,y)=g(x)g(xy)^{-1}\sigma_x(g(y))f^t(x,y)$
\item $h^s(x,y)=g(x*y)^{-1}g(x)g(y)\sigma_{x*y}(g(x)^{-1}g(y)^{-1})h^t(x,y)$
\end{enumerate}
\end{definition}
The set $Z^2_{ML(\sigma)}(K,H)$ of all multiplicative Lie center 2-cocycles of $K$ with coefficient in $H$ is easily seen to be an abelian group with respect to coordinate wise operation given by $(f,h, \sigma)\cdot(f^{'},h^{'}, \sigma)=(ff^{'},gg^{'}, \sigma)$. Given any identity preserving map $g:K \longrightarrow H$, the triplet $(\delta g,g^{*}, \sigma)$ is a member of $Z^2_{ML(\sigma)}(K,H)$, where $\delta g,g^{*}$ are maps from $K \times K$ to $H$ given by $\delta g((x,y))=g(x)g(xy)^{-1}\sigma_x(g(y))$ and $g^*(x,y)=g(x*y)^{-1}g(x)g(y)\sigma_{x*y}(g(x)^{-1}g(y)^{-1})$.

Let $MAP(K,H)$ denote the group of identity preserving map from K to H. So we have a homomorphism $\chi:MAP(K,H)\longrightarrow Z^2_{ML(\sigma)}(K,H)$ given by $\chi(g)=(\delta g,g^*,\sigma)$.

The image of $\chi$ is called the group of multiplicative Lie center 2-coboundaries of $K$ with coefficient in $H$ and it is denoted by $B^2_{ML(\sigma)}(K,H)$. The quotient group $\frac{Z^2_{ML(\sigma)}(K,H)}{B^2_{ML(\sigma)}(K,H)}$ is called the second Lie center cohomology of $K$ with coefficient in $H$ and it is denoted by $H^2_{ML(\sigma)}(K,H)$. In turn, we get the following exact sequence of abelian groups,
$ 1\rightarrow  Hom(K,H)\xrightarrow{i} MAP(K,H)\xrightarrow{\chi} Z^2_{ML(\sigma)}(K,H)\xrightarrow{\nu} H^2_{ML(\sigma)}(K,H)\rightarrow 1$, 
where $\nu$ is quotient map.  Now we have following theorem which proof is similar to Theorem \ref{Bijective correspondence}.
\begin{theorem}
Let $H$ be an abelian group with trivial multiplicative Lie algebra structure, $K$ be a multiplicative Lie algebra and $\sigma$ be a group homomorphism from $K$ to the group of automorphisms $Aut(H)$ of $H$. Then there is a bijective correspondence between the set $LExt_\sigma(H,K)$ of equivalence classes of Lie center extensions of $H$ by $K$ with the given $\sigma$ and the second Lie co-homology $H^2_{ML(\sigma)}(K,H)$.   
\end{theorem}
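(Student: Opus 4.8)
The plan is to reproduce the proof of Theorem~\ref{Bijective correspondence} essentially line for line, replacing the center data $(f,h,\Gamma)$ and $End(H)$ by the Lie-center data $(f,h,\sigma)$ and $Aut(H)$ throughout. Concretely, I would define a map $\Phi\colon LExt_\sigma(H,K)\to H^2_{ML(\sigma)}(K,H)$ by $\Phi([E])=(f^t,h^t,\sigma)\,B^2_{ML(\sigma)}(K,H)$, where $t$ is any chosen section of the Lie center extension $E(H,K)$ and $(f^t,h^t,\sigma_E)$ is its associated multiplicative Lie center $2$-cocycle, and then verify that $\Phi$ is a well-defined bijection.

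First I would settle well-definedness, which has two parts. For independence of the section, if $s=g\,t$ is a second section with $g(1)=1$, the change-of-section computations carried out just above (equations $(27)$--$(30)$) yield exactly the boxed formulas $f^s=(\delta g)\,f^t$ and $h^s=g^{*}\,h^t$, so that $(f^s,h^s,\sigma)$ and $(f^t,h^t,\sigma)$ differ by the coboundary $\chi(g)=(\delta g,g^{*},\sigma)\in B^2_{ML(\sigma)}(K,H)$ and hence determine the same class; the third coordinate remains the common $\sigma$ because $\sigma_E^t$ is independent of the section (Remark~3.1). For independence of the representative, an equivalence $(I_H,\phi,I_K)$ from $E$ to $E'$ is in particular a morphism of extensions, and the morphism analysis giving equations $(\ref{16})$--$(\ref{19})$ specializes under $\lambda=I_H$, $\nu=I_K$ to a map $g$ with $g(1)=1$ for which $(f^{t'},h^{t'},\sigma)$ and $(f^{t},h^{t},\sigma)$ again differ by a coboundary; the third coordinate is the common $\sigma$ once more by Remark~3.1. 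Hence $\Phi([E])$ depends only on the equivalence class $[E]$.

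Next I would prove surjectivity and injectivity. Surjectivity is immediate from Proposition~\ref{Lie center factor and extension} (equivalently, the remark following the definition of multiplicative Lie center $2$-cocycle): given any $(f,h,\sigma)\in Z^2_{ML(\sigma)}(K,H)$ there is a Lie center extension $E$ with a section $t$ realizing $(f^t,h^t,\sigma_E)=(f,h,\sigma)$, so $\Phi([E])$ is the prescribed coset. For injectivity, suppose $\Phi([E])=\Phi([E'])$ with sections $t,t'$. Then $(f^t,h^t,\sigma)$ and $(f^{t'},h^{t'},\sigma)$ lie in the same coset of $B^2_{ML(\sigma)}(K,H)$, so there is $g$ with $g(1)=1$ satisfying conditions (1)--(2) of the equivalence of Lie center $2$-cocycles. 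These are precisely the statements that the triple $(I_K,g,I_H)$ is an isomorphism of Lie center factor systems $(K,H,f^t,h^t,\sigma)\to(K,H,f^{t'},h^{t'},\sigma)$ in $\mathbf{LFAC}$, whence by the equivalence of categories in Theorem~\ref{equivalence between Lie} the extensions $E$ and $E'$ are equivalent, i.e. $[E]=[E']$.

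The main obstacle is bookkeeping rather than conceptual: one must check that the change-of-section and change-of-representative formulas coincide with the coboundary maps $\delta g$ and $g^{*}$ exactly, now carrying the automorphism twists $\sigma_x$ and $\sigma_{x*y}$ that were absent in the center case. The one point deserving attention is that the general morphism equation $(\ref{18})$, namely $g(x)\sigma_{\nu(x)}^{2}(\lambda(h))g(x)^{-1}=\lambda(\sigma_x^{1}(h))$, contributes no new constraint in the equivalence situation: with $\lambda=I_H$, $\nu=I_K$ and the common $\sigma$ it collapses, using commutativity of $H$, to the trivial identity $\sigma_x(h)=\sigma_x(h)$. Once this compatibility is confirmed, surjectivity and injectivity follow formally from Proposition~\ref{Lie center factor and extension} and Theorem~\ref{equivalence between Lie}, exactly as in Theorem~\ref{Bijective correspondence}.
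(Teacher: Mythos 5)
Your proposal is correct and matches the paper's intent exactly: the paper gives no separate proof here, stating only that the argument is ``similar to Theorem~\ref{Bijective correspondence}'', and your adaptation carries out precisely that proof with the center data $(f,h,\Gamma)$ replaced by the Lie-center data $(f,h,\sigma)$, using the change-of-section formulas, Proposition~\ref{Lie center factor and extension} for surjectivity, and Theorem~\ref{equivalence between Lie} for injectivity. Your observation that equation~(\ref{18}) collapses to a triviality when $\lambda=I_H$, $\nu=I_K$ (by commutativity of $H$) is a correct and worthwhile detail the paper leaves implicit.
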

\begin{remark}
As in Section 2, we can define Baer sum on class of Lie center extensions and with respect to that Baer sum $LExt_\sigma(H,K)$ forms an abelian group which is isomorphic to $H^2_{ML(\sigma)}(K,H)$.
\end{remark}
\begin{remark} By Section 2 and Section 3, it is easy to see that for any extension \begin{tikzcd}
  E(H, K) \equiv 1\arrow{r} & H\arrow{r}{i} & G \arrow{r}{\beta} & K\arrow{r} & 1
\end{tikzcd}
of an abelian group $H$ with trivial multiplicative Lie algebra structure by an arbitrary multiplicative Lie algebra $K$, the group operation $``\cdot"$ and the multiplicative Lie algebra structure $``*"$ in $G$ are given by
$$(ht(x))\cdot (kt(y))=h\sigma_x^t(k)f^t(x,y)t(xy)$$
 
$$(ht(x))* (kt(y))=hk\Gamma^t_x(k)\sigma_{(x*y)}(h^{-1}k^{-1}\Gamma^t_y(h^{-1})) h^t(x,y)t(x*y)$$

where $\sigma_x^t(k)=t(x)kt(x)^{-1}$, $\Gamma^t_x(k)=t(x)*k$ are group homomorphism on $H$
and $f^t, h^t: K\times K \longrightarrow H$ are maps satisfying equations (\ref{10}) and (\ref{11}). One can discuss other properties of the extension $E(H, K)$ accordingly.
\end{remark}
\textbf{Acknowledgement}: We are extremely thankful to Prof. Ramji Lal for his valuable suggestions, discussions and constant support. The first named author thanks IIIT Allahabad for providing institute fellowship and the second named author also thanks IIIT Allahabad for providing one time seed money project..

\end{document}